\documentclass[11pt]{amsart}
\usepackage{amsmath, amssymb, amsbsy, amsfonts, amsthm, latexsym, amsopn, amstext, amsxtra, euscript, amscd, color, mathrsfs}
\usepackage[normalem]{ulem}
\usepackage{soul}
\usepackage{cite}

\hoffset -2cm
\voffset -1cm
\textwidth 16.5truecm
\textheight 22.5truecm

\PassOptionsToPackage{hyphens}{url}\usepackage{hyperref}
 
 \usepackage[capbesideposition=outside,capbesidesep=quad]{floatrow}

\captionsetup{labelfont=bf, justification=justified, singlelinecheck=false, position=above}

\restylefloat{table}
\restylefloat{table}
            
\usepackage{multirow,caption}
            
\usepackage{amscd}
\usepackage{color,enumerate}
\newcommand{\RNum}[1]{\lowercase\expandafter{\romannumeral #1\relax}}

\setlength{\marginparwidth}{2.2cm}
\usepackage[colorinlistoftodos,prependcaption,textsize=tiny]{todonotes}

\newtheorem{thm}{Theorem}[section]
\newtheorem{lem}[thm]{Lemma}
\newtheorem{cor}[thm]{Corollary}

\newtheorem{rem}[thm]{Remark}

\newtheorem{thm-con}[thm]{Theorem-Conjecture}
\numberwithin{equation}{section}

\def\cN{{\mathcal N}}

\theoremstyle{definition}

\newcommand{\F}{\mathbb F}

\def\Trn{{\rm Tr}_1^n}

\newcommand{\cardinality}[1]{\# #1}

\usepackage{todonotes}
\setlength\marginparwidth{0.8in}

\usepackage{lipsum}
\usepackage{marginnote}

\begin{document}
\title[APN functions in odd characteristic]{\Large On APN functions in odd characteristic, the disproof of a conjecture and related problems}
\author{Daniele Bartoli}
\address{Department of Mathematics and Computer Science, University of Perugia,  06123 Perugia, Italy}
\email{daniele.bartoli@unipg.it}
\author{Pantelimon St\u anic\u a}
\address{Applied Mathematics Department, Naval Postgraduate School,
Monterey, CA 93943, USA}
\email{pstanica@nps.edu}

\date{\today}

\keywords{Finite fields,  permutation polynomials, varieties, irreducible components}
\subjclass[2020]{11G20, 11T06, 12E20, 14Q10}

\begin{abstract}
In this paper disprove a conjecture by Pal and Budaghyan (DCC, 2024) on the existence of a family of APN permutations, but showing that if the field's cardinality $q$ is larger than~$9587$, then those functions will never be APN. Moreover, we discuss other connected families of functions, for potential APN functions, but we show that they are not good candidates for APNess if the underlying field is large, in spite of the fact that they  though they are APN for small environments.
\end{abstract}

\maketitle
 
\section{Introduction}

Let $\F_q$ be the finite field with $q=p^k$ elements, where $k$ is a positive integer. We denote by $\F_q^*$ the multiplicative group of nonzero elements of $\F_q$ and by $\F_q[X]$ the polynomial ring in the indeterminate $X$ over a finite field $\F_q$. A polynomial $f \in \F_q[X]$ is called a permutation polynomial if the equation $f(X)=a$ has exactly one solution in $\F_q$ for each $a\in \F_q$.  Below, we let $\chi_1(a)=\exp\left(\frac{2\pi i\Trn(a)}{q}\right)$ be the principal additive character of $\F_q$, $q=p^n$.

Given a vectorial $p$-ary  function $f:\F_{p^n}\to\F_{p^n}$, the derivative of $f$ with respect to~$a \in \F_{p^n}$ is the $p$-ary  function
$
 D_{a}f(x) =  f(x + a)- f(x), \mbox{ for  all }  x \in \F_{p^n}.
$ 
For an $(n,m)$-function  
$F$, and $a\in\F_{p^n},b\in\F_{p^m}$, we let $\Delta_F(a,b)=\cardinality{\{x\in\F_{p^n} : F(x+a)-F(x)=b\}}$. We call the quantity
$\delta_F=\max\{\Delta_F(a,b)\,:\, a,b\in \F_{p^n}, a\neq 0 \}$ the {\em differential uniformity} of $F$. If $\delta_F\leq \delta$, then we say that $F$ is differentially $\delta$-uniform. If $m=n$ and $\delta=1$, then $F$ is called a {\em perfect nonlinear} ({\em PN}) function, or {\em planar} function. If  $m=n$ and $\delta=2$, then $F$ is called an {\em almost perfect nonlinear} ({\em APN}) function. It is well known that PN functions do not exist if $p=2$. 

We will denote by $\eta(\alpha)$ the quadratic character of $\alpha$ (that is, $\eta(\alpha)=0$ if $\alpha=0$, $\eta(\alpha)=1$ if $0\neq \alpha$ is a square, $\eta(\alpha)=-1$ if $\alpha$ is not a square).


\section{Preliminaries from Function Field Theory} \label{Sec:AlgGeo}

In this paper we will make use of some concepts concerning Function Field Theory. This will yield  a lower bound on the number of permutation polynomials of the desired shape. 
 
We recall that a {\em function field} over a perfect field $\mathbb L$ is an extension $\mathbb F$ of $\mathbb L$ such that  $\mathbb F$ is  a finite algebraic extension of $\mathbb L(\alpha)$, with $\alpha$ transcendental over $\mathbb L$. For basic definitions on  function fields we refer to \cite{Sti}. In particular, the (full) constant field of $\mathbb F$ is the set of elements of $\mathbb F$ that are algebraic over $\mathbb L$.

If $\F^{\prime}$ is a finite extension of $\F$, then a place $P^{\prime}$ of $\F^\prime$ is said to be \emph{lying over} a place $P$ of $\F$ if $P \subset P^\prime$. This holds precisely when
$P=P^{\prime} \cap \F$. In this paper, $e(P^{\prime}|P)$ will denote the ramification index of $P^{\prime}$ over $P$. 
A finite extension $\F^{\prime}$ of a function field $\F$ is said to be \emph{unramified} if $e(P^{\prime}|P)=1$ for
every $P^{\prime}$ place of $\F^{\prime}$ and every $P$ place of $\F$ with $P^{\prime}$ lying over $P$. Since it is not needed here, we do not go into the tamely or totally ramification extensions' notions. Throughout the paper, we will refer to the following results. 

\begin{thm}\label{Th_Kummer}\textup{\cite[Cor. 3.7.4]{Sti}}
Consider an algebraic function field $\F$ with constant field $\mathbb L$ containing a primitive $n$-th root of unity ($n>1$ and $n$ relatively prime to the characteristic of $\mathbb L$). Let $u\in \F$ be such that 
there is a place $Q$ of $\F$ with $\gcd(v_Q(u),n)=1$ (see~\textup{\cite[Definition 1.1.2]{Sti}} for the definition of the discrete valuation function $v_Q$). Let $\F^{\prime}=\F(y)$ with $y^n=u$. Then:
\begin{enumerate}
\item[$(1)$] $\Phi(T)=T^n-u$ is the minimal polynomial of $y$ over $\F$. The extension $\F^{\prime}:\F$ is Galois of degree $n$ and the Galois group of  $\F^{\prime}:\F$ is cyclic;
\item[$(2)$] We have $$e(P^{\prime}|P)=\frac{n}{r_P} \quad \textrm{where} \quad r_P:= GCD(n,v_P(u))>0\,;$$
\item[$(3)$] $\mathbb L$ is the  constant field of $\F^\prime$;
\item[$(4)$] If $g^{\prime}$ (resp., $g$) is the genus of $\F^{\prime}$ (resp. $\F$), then
$$ g^{\prime}=1+n(g-1)+\frac{1}{2}\sum_{P\in\mathbb{P}(\F)}(n-r_P)\deg P\,. $$
\end{enumerate}
\end{thm}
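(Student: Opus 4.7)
The plan is to establish the four assertions by using the distinguished place $Q$ (where $\gcd(v_Q(u),n)=1$) as a ``rigidifying'' point that forces the extension to behave maximally.

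First I would prove (1). For irreducibility of $\Phi(T)=T^n-u$, I invoke Capelli's criterion: $T^n-u$ is reducible over $\F$ only if $u$ is a $d$-th power in $\F$ for some prime $d\mid n$, or $4\mid n$ and $u=-4w^4$. Either case would force $d\mid v_Q(u)$ or $4\mid v_Q(u)$, contradicting $\gcd(v_Q(u),n)=1$. Hence $[\F^\prime:\F]=n$. Since $\mathbb L$ contains a primitive $n$-th root $\zeta_n$, $\Phi(T)$ splits in $\F^\prime$ as $\prod_i(T-\zeta_n^i y)$, so the extension is Galois, and $\sigma\mapsto\sigma(y)/y$ embeds $\mathrm{Gal}(\F^\prime/\F)$ into the cyclic group $\langle\zeta_n\rangle$ of order $n$; comparing orders gives the isomorphism.

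For (2), fix a place $P$ of $\F$, set $m=v_P(u)$, $r_P=\gcd(n,m)$, and write $n=r_Pt$, $m=r_Ps$ with $\gcd(s,t)=1$. For any $P^\prime\mid P$, extending the valuation to $\F^\prime$ yields $n\cdot v_{P^\prime}(y)=e(P^\prime|P)\cdot m$, and coprimality of $s,t$ forces $t\mid e(P^\prime|P)$, hence $e(P^\prime|P)\ge n/r_P$. The matching upper bound follows from $efg=n$ combined with an explicit local construction: choosing Bézout coefficients $a,b$ with $as+bt=1$ and a $P$-uniformizer $\pi$, the element $z=y^{a}\pi^{b\,v_{P^\prime}(y)/s}$ (adjusted appropriately) gives an element of $P^\prime$-valuation~$1$, realizing $e(P^\prime|P)=n/r_P$. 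Applied at $Q$ where $r_Q=1$, this gives total ramification $e(Q^\prime|Q)=n$, so the residue field at $Q^\prime$ coincides with that at $Q$. Any element of the constant field of $\F^\prime$ must embed into every residue field, so it embeds into the residue field of $Q$, which is an $\mathbb L$-algebra of the same constant field; this forces the full constant field of $\F^\prime$ to remain $\mathbb L$, yielding~(3).

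For (4), I apply Riemann--Hurwitz to $\F^\prime/\F$. Since $\gcd(n,\mathrm{char}(\mathbb L))=1$, every ramification is tame, so the different exponent at $P^\prime\mid P$ is exactly $e(P^\prime|P)-1=n/r_P-1$. In the Galois setting, $e,f$, and the number $g$ of places above $P$ are constant, with $efg=n$; the contribution at $P$ to the degree of the different is therefore
\[
\sum_{P^\prime\mid P}(e(P^\prime|P)-1)f(P^\prime|P)\deg P=(n/r_P-1)\,r_P\deg P=(n-r_P)\deg P.
\]
Riemann--Hurwitz then gives $2g^\prime-2=n(2g-2)+\sum_P(n-r_P)\deg P$, which rearranges to the stated genus formula. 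The main obstacle in the whole argument is the ramification calculation in~(2): producing an explicit uniformizer that attains $e(P^\prime|P)=n/r_P$ is the only step that requires genuine local work, and once this identity is in hand, both the constant-field statement and the genus formula follow essentially for free.
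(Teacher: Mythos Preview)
The paper does not prove this theorem at all: it is quoted verbatim as \cite[Cor.~3.7.4]{Sti} and used as a black box from Stichtenoth's textbook, so there is no ``paper's own proof'' to compare against.

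That said, your sketch follows the standard textbook route (irreducibility from the valuation hypothesis, cyclicity from the roots of unity, ramification from the relation $n\,v_{P'}(y)=e(P'|P)\,v_P(u)$, constant field from total ramification at $Q$, genus from tame Riemann--Hurwitz). The one soft spot is the upper bound $e(P'|P)\le n/r_P$ in part~(2): the Bézout-type construction you describe only produces, from $y$ and a $P$-uniformizer $\pi$, elements whose $P'$-valuations are multiples of $k$ where $e(P'|P)=k\cdot n/r_P$, so it does not by itself force $k=1$. To close this gap one typically passes to the completion and factors $T^n-u$ there, or uses the intermediate field $\F(y^{n/r_P})$ to bound the residue degree and the number of places above $P$; either argument is short, but your phrase ``adjusted appropriately'' is doing real work that should be spelled out. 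Once (2) is nailed down, your derivations of (3) and (4) are correct and essentially the same as Stichtenoth's.
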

An extension such as $\F^{\prime}$ in Theorem \ref{Th_Kummer}  is said to be a Kummer extension of $\F$.

Denote by $\F_q$ the finite field with $q$ elements and let $\mathbb{K}$ be the algebraic closure of  $\F_q$. A curve $\mathcal{C}$ in some affine or projective space over $\mathbb{K}$ is said to be defined over $\mathbb{F}_q$ if the ideal of $\mathcal{C}$ is generated by polynomials with coefficients in $\mathbb{F}_q$. Let $\mathbb{K}(\mathcal{C})$ denote the function field of $\mathcal C$. The subfield  $\mathbb{F}_q(\mathcal{C})$ of  $\mathbb{K}(\mathcal{C})$ consists of the rational functions on $\mathcal C$ defined over $\mathbb{F}_q$. The extension $\mathbb{K}(\mathcal C):\F_q(\mathcal C)$ is a constant field extension (see \cite[Section 3.6]{Sti}). In particular, $\mathbb{F}_q$-rational places of $\F_q(\mathcal C)$ can be viewed as the restrictions to $\F_q(\mathcal C)$ of  places of $\mathbb{K}(\mathcal{C})$ that are fixed by the Frobenius map on $\mathbb{K}(\mathcal{C})$. The center of an $\mathbb{F}_q$-rational place is an $\mathbb{F}_q$-rational point of $\mathcal{C}$; conversely,  if $P$ is a simple $\mathbb{F}_q$-rational point of $\mathcal{C}$, then the only place centered at $P$ is $\mathbb{F}_q$-rational. Through the paper, we sometimes use concepts from both Function Field Theory and Algebraic Curves. Concepts such as the valuation of a function at a place can be also seen as multiplicity of intersections of fixed algebraic curves; see \cite{Sti}. 

We now recall the well-known Hasse-Weil bound.

\begin{thm} \label{HasseWeil}\emph{(Hasse-Weil bound, \cite[Theorem 5.2.3]{Sti})}
The number $N_q$ of $\mathbb{F}_q$-rational places of a function field $\,\F$ with constant field $\F_q$ and genus $g$ satisfies 

$$|N_q - (q + 1)| \leq 2g\sqrt{q}.$$
\end{thm}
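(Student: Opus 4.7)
The plan is to derive the bound from the rationality and Riemann Hypothesis for the zeta function of $\F$. First I would define
$$Z_\F(t) = \prod_{P} \frac{1}{1-t^{\deg P}} = \sum_{n \geq 0} A_n t^n,$$
where the product runs over all places $P$ of $\F$ and $A_n$ counts effective divisors of degree $n$. Using the Riemann-Roch theorem together with finiteness of the degree-zero divisor class group, one shows $Z_\F(t) = L(t) / ((1-t)(1-qt))$ where $L(t) \in \mathbb{Z}[t]$ has degree $2g$ with $L(0)=1$, and satisfies the functional equation $Z_\F(1/(qt)) = q^{1-g} t^{2-2g} Z_\F(t)$, which is a direct consequence of Serre duality on the underlying curve.

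Next, writing $L(t) = \prod_{i=1}^{2g}(1-\alpha_i t)$ over $\mathbb{C}$ and comparing the logarithmic derivative of $Z_\F(t)$ in its Euler product form with its rational expression, one obtains the explicit formula
$$N_{q^r} = q^r + 1 - \sum_{i=1}^{2g} \alpha_i^r, \qquad r \geq 1.$$
In particular, the case $r=1$ shows that the asserted bound $|N_q - (q+1)| \leq 2g\sqrt{q}$ reduces entirely to establishing $|\alpha_i| = \sqrt{q}$ for every $i$, i.e. the Riemann Hypothesis for the function field $\F$.

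The main obstacle is the Riemann Hypothesis itself. I would follow the Stepanov-Bombieri elementary route: for a sufficiently large auxiliary integer $\ell$, construct a rational function $\phi \in \F_{q^r}(\mathcal{C})$ vanishing to order at least $\ell$ at every $\F_{q^r}$-rational point of $\mathcal{C}$, while having a controlled pole divisor, produced via a Riemann-Roch dimension count. Comparing the total number of zeros of $\phi$ against its total pole order yields an upper bound of the form $N_{q^r} \leq q^r + C\sqrt{q^r}$ valid for all sufficiently large $r$. Substituting this into the explicit formula above and letting $r$ range forces $|\alpha_i| \leq \sqrt{q}$ for every $i$. The functional equation, which pairs the $\alpha_i$ into products equal to $q$, then supplies the matching lower bound $|\alpha_i| \geq \sqrt{q}$, so $|\alpha_i| = \sqrt{q}$. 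Feeding this back into the $r=1$ formula concludes the proof, since
$$|N_q - (q+1)| = \Bigl| \sum_{i=1}^{2g} \alpha_i \Bigr| \leq \sum_{i=1}^{2g} |\alpha_i| = 2g \sqrt{q}.$$
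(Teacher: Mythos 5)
The paper does not prove this statement; it is quoted as a classical result and deferred entirely to the cited reference (Stichtenoth, Theorem 5.2.3). Your outline --- zeta function rationality via Riemann--Roch, the functional equation, the explicit formula $N_{q^r}=q^r+1-\sum_i\alpha_i^r$, and the Riemann Hypothesis $|\alpha_i|=\sqrt{q}$ established by the Stepanov--Bombieri auxiliary-function method combined with the pairing $\alpha_i\mapsto q/\alpha_i$ from the functional equation --- is precisely the standard route taken in that reference, and the sketch is correct at the level of detail given.
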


In order to apply the Hasse-Weil bound, the following lemma will be useful.

\begin{lem}\label{ConstantField}\textup{\cite[Lemma 1]{BGZ2016}}
Let $\F_q(\beta_1,\ldots,\beta_n)$ be a function field 
with constant field $\F_q$. Suppose that $f\in\F_q(\beta_1,\ldots,\beta_n)[T]$ is a polynomial which is irreducible over $\mathbb{K}(\beta_1,\ldots,\beta_n)[T]$. Then, for a root $z$ of $f$, the field $\F_q$ is the  constant field of $\F_q(\beta_1,\ldots,\beta_n)(z)$.
\end{lem}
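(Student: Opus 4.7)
The plan is to let $\mathbb{L}\subseteq\mathbb{K}$ denote the full constant field of $L':=\F_q(\beta_1,\ldots,\beta_n)(z)$ and show $[\mathbb{L}:\F_q]=1$. Writing $L:=\F_q(\beta_1,\ldots,\beta_n)$ and $K:=\mathbb{K}(\beta_1,\ldots,\beta_n)$, the containment $\mathbb{L}\subseteq L'$ together with $\beta_1,\ldots,\beta_n,z\in L'$ gives $L'=\mathbb{L}(\beta_1,\ldots,\beta_n)(z)$, so we obtain the tower
\[
L \;\subseteq\; \mathbb{L}(\beta_1,\ldots,\beta_n) \;\subseteq\; L',
\]
and the strategy is to force $[\mathbb{L}(\beta_1,\ldots,\beta_n):L]=1$ by computing the outer degree in two different ways.

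First I would exploit the irreducibility hypothesis: since $f$ is irreducible over $K[T]$, any putative factorization over a subring of $K[T]$ would persist over $K[T]$, so $f$ is also irreducible over $L[T]$ and over $\mathbb{L}(\beta_1,\ldots,\beta_n)[T]$. Setting $d:=\deg f$, this gives both $[L':L]=d$ and $[L':\mathbb{L}(\beta_1,\ldots,\beta_n)]=d$. Next, the extension $\mathbb{L}(\beta_1,\ldots,\beta_n)/L$ is a constant field extension of a function field whose full constant field is $\F_q$ by hypothesis; the standard result that such extensions have degree equal to the degree of the underlying constant field extension (see \cite[Sec.~3.6]{Sti}) then yields $[\mathbb{L}(\beta_1,\ldots,\beta_n):L]=[\mathbb{L}:\F_q]$.

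Combining these in the tower via multiplicativity of degrees,
\[
d \;=\; [L':L] \;=\; [L':\mathbb{L}(\beta_1,\ldots,\beta_n)]\cdot[\mathbb{L}(\beta_1,\ldots,\beta_n):L] \;=\; d\cdot[\mathbb{L}:\F_q],
\]
which forces $[\mathbb{L}:\F_q]=1$ and hence $\mathbb{L}=\F_q$. The step that requires the most care, and is the real engine of the proof, is the equality $[\mathbb{L}(\beta_1,\ldots,\beta_n):L]=[\mathbb{L}:\F_q]$: this rests on the assumption that $\F_q$ is already the \emph{full} constant field of $L$ (not merely contained in it), otherwise the constant field extension could collapse. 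Once this is granted, the lemma reduces to routine tower-of-fields bookkeeping, with the $K$-irreducibility of $f$ preventing any degree collapse in the upper step.
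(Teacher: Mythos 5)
The paper does not prove this lemma at all: it is imported verbatim as \cite[Lemma 1]{BGZ2016}, so there is no in-text argument to compare against. Your proof is correct and is the standard argument for this statement (and, as far as I can tell, essentially the one in the cited source): the $\mathbb{K}(\beta_1,\ldots,\beta_n)$-irreducibility of $f$ pins $[L':L]$ and $[L':\mathbb{L}(\beta_1,\ldots,\beta_n)]$ both at $\deg f$, while linear disjointness of constant field extensions (valid here since $\F_q$ is perfect and is the full constant field of $L$, cf.\ \cite[Section 3.6]{Sti}) gives $[\mathbb{L}(\beta_1,\ldots,\beta_n):L]=[\mathbb{L}:\F_q]$, forcing $[\mathbb{L}:\F_q]=1$. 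You correctly flag the one step that actually needs a theorem rather than bookkeeping.
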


 \section{A ``potential'' infinite class of APN functions}

  First, we prove the following result, which finds some low differential uniformity functions in odd characteristic. In our follow up result, we complete the proof for all the other cases and show that the conjecture of~\cite{BP24} is  false.
 \begin{thm}
Let $F(x)=x^{\frac{p^n+3}{2}}+u\,x^2$ on $\F_{p^n}$, where $u\in\F_{p^n}$ satisfies $u\notin\{0,\pm 1\}$.
If $u=-3$ and  $p^n\equiv 5\pmod 8$,  then the differential uniformity of $F$   on $\F_{p^n}$ is~$\leq 4$.  
 \end{thm}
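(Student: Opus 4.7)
The plan is to exploit the pointwise identity $x^{(p^n-1)/2} = \eta(x)$ on $\F_{p^n}^*$ (with $\eta(0) = 0$), which rewrites the function as $F(x) = (\eta(x) - 3) x^2$: so $F(x) = -2 x^2$ on the squares and on $0$, and $F(x) = -4 x^2$ on the non-squares. For fixed $a \in \F_{p^n}^*$ and $b \in \F_{p^n}$, partition the solution set of $F(x+a) - F(x) = b$ by $(\eta(x), \eta(x+a)) \in \{\pm 1\}^2$. The same-sign cases give linear equations with unique candidate roots $x_1 = -(b+2a^2)/(4a)$ and $x_4 = -(b+4a^2)/(8a)$; the opposite-sign cases give quadratics $(x-a)^2 = 2a^2 + b/2$ and $(x+2a)^2 = 2a^2 - b/2$, with candidate roots $a \pm \alpha$ and $-2a \pm \beta$, respectively. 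Each candidate counts toward $\Delta_F(a,b)$ only if its actual $\eta$-type matches the assumption used to derive it, so naively there are up to $1+2+2+1 = 6$ candidates.

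To sharpen the bound to $4$, I would apply $p^n \equiv 5 \pmod 8$ through the two identities $\eta(-1) = +1$ and, crucially, $\eta(2) = -1$. Whenever both quadratics each yield two valid solutions, the product-of-roots identities on each pair force
\[
\eta(a^2 + b/2) = \eta(a^2 - b/2) = \eta(2a^2 + b/2) = \eta(2a^2 - b/2) = 1,
\]
and combining with $\eta(2) = -1$ yields $\eta(b \pm 2a^2) = \eta(b \pm 4a^2) = -1$. Substituting into the formulas for $\eta(x_1)$ and $\eta(x_4)$, which also involve $\eta(a)$, and checking sign-consistency of the linear candidates, one finds that at least when $a$ is a square both $x_1$ and $x_4$ fail their consistency check, so $\Delta_F(a,b) \leq 0 + 2 + 2 + 0 = 4$. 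The residual sub-cases in which the four residue conditions are not all met are dispatched by direct inspection: whichever condition fails immediately invalidates one linear candidate or reduces a quadratic pair to a single valid root.

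The delicate sub-case is $a$ a non-square under the all-four-squares configuration: here both $x_1$ and $x_4$ pass the first-order sign check, so one must extract a further algebraic relation between the two quadratics. I would use $\alpha^2 + \beta^2 = 4a^2$ together with $\alpha^2 \beta^2 = 4a^4 - b^2/4$ to detect the coincidence of a root of the $(-,+)$ quadratic with a root of the $(+,-)$ quadratic, which happens exactly when $b^2 = -9a^4$ (equivalently $\alpha\beta = \pm 5 a^2/2$). At such a coincident $b$ the common point $x^\ast = -\tfrac{a}{2}(1 \mp \sqrt{-1})$ carries a single $(\eta(x^\ast), \eta(x^\ast+a))$-type; the product-of-roots identities then force one of the two opposite-sign candidate pairs to contribute zero valid solutions, collapsing the total to $1+2+0+1 = 4$. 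The main obstacle is proving that, when $p^n \equiv 5 \pmod 8$ and $a$ is a non-square, the all-four-squares configuration is \emph{only} realized at such a coincident $b^2 = -9a^4$; this is a specific arithmetic claim, equivalent to showing that $1 \pm w$ and $2 \pm w$ being all nonzero squares forces $w^2 = -9/4$ for $w = b/(2a^2)$, and it would need to be verified by a direct algebraic manipulation or by a separate character-sum/genus argument.
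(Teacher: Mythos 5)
Your setup coincides with the paper's: the rewriting $F(x)=(\eta(x)-3)x^2$, the partition of solutions by $(\eta(x),\eta(x+a))$, the two linear candidates and the two quadratics $(x-a)^2=2a^2+b/2$, $(x+2a)^2=2a^2-b/2$, and the use of $\eta(-1)=1$, $\eta(2)=-1$ are all exactly what the paper does, and your treatment of the sub-case $\eta(a)=1$ (where the product-of-roots conditions force both linear candidates to fail their sign checks) is correct. The problem is the sub-case you yourself flag as delicate, $\eta(a)=-1$ with all four of $\eta(a^2\pm b/2)=\eta(2a^2\pm b/2)=1$: there the naive count is $1+1+2+2=6$, and your proposed collapse to $4$ rests on the claim that this configuration forces $b^2=-9a^4$, equivalently that $1\pm w$ and $2\pm w$ all being nonzero squares forces $w^2=-9/4$. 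That claim is false for large $q$: the four linear forms $1\pm w$, $2\pm w$ are multiplicatively independent, so by the Weil bound the number of $w\in\F_q$ with all four values nonzero squares is $q/16+O(\sqrt{q})$, which for $q$ large vastly exceeds the two roots of $w^2=-9/4$. (Small fields such as $\F_{53}$, where the only such $w$ do satisfy $w^2=-9/4$, are misleading coincidences.) So the root-coincidence mechanism cannot be the reason the count stays at $4$; whatever obstruction exists must come from the \emph{individual} quadratic characters of the roots $a\pm\alpha$, $-2a\pm\beta$, not from the product conditions alone, and your proposal does not supply it.

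For what it is worth, you have put your finger on precisely the point where the paper's own argument is thinnest: the paper disposes of this configuration by asserting that ``each case might contribute at most one solution,'' which is immediate for the two linear cases but is not justified for the two quadratic cases (and sits uneasily with the paper's own parenthetical remark that a quadratic case ``might have one or two solutions''). So your decomposition is the right one, but to close the gap you would need a genuine argument that the two quadratic cases cannot each contribute two valid roots simultaneously with both linear candidates when $\eta(a)=-1$ --- for instance by exploiting the relation $\alpha^2+\beta^2=4a^2$ together with the individual character constraints, rather than only the pairwise products.
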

\begin{proof}
For given $a\in\F_{p^n}^*, b\in\F_{p^n}$ we need to look at the differential equation $F(x+a)-F(x)=b$, that is,
\[
(x+a)^{\frac{p^n+3}{2}}+u(x+a)^2-x^{\frac{p^n+3}{2}}-ux^2=b.
\]
Denoting $t_a=\eta(x+a),t_x=\eta(x)$, and noting that $\frac{p^n+3}{2}=\frac{p^n-1}{2}+2$, the equation above becomes
\begin{equation}
\label{eq:main1}
 (x+a)^2 (t_a+u)-x^2(t_x+u)=b.
\end{equation}

We now distinguish four cases, which are displayed in the next table.
{\small
\begin{equation}
\label{Table:cases2}
\begin{array}{|l||r|r|c|c|c|}
\hline \text { Case } & t_a & t_x & \text { Equation \eqref{eq:main1} } & x & x+a \\
\hline 
C_{1,1} & 1 & 1 & a^2(u+1)+2a(u+1) x=b & \frac{b-(u+1)a^2}{2 a (u+1)} & \frac{b+(u+1)a^2}{2 a (u+1)} \\
\hline 
C_{-1,-1} & -1 & -1 &  a^2 (u-1) + 2 a (u-1) x=b & \frac{b-(u-1)a^2}{2 a (u-1)} & \frac{b+ (u-1)a^2}{2 a (u-1)} \\
\hline 
 {C_{-1,1}} &  {-1} &  {1} & (u-1)a^2 + 2a (u-1)  x - 2 x^2 =b&  \frac{a (u-1)\pm\sqrt{a^2 \left(u^2-1\right)-2b}}{2} &\frac{\left(a(u+1)\pm \sqrt{a^2 (u^2-1)-2 b}\right)}{2}  \\
\hline 
 {C_{1,-1}} & {1} &  {-1} &  (u+1) a^2  + 2a(u+1)  x + 2 x^2 =b&  \frac{  -a
   (u+1)\pm \sqrt{a^2 \left(u^2-1\right)+2 b} }{2} & \frac{  -a(u-1)\pm \sqrt{a^2 (u^2-1)+2 b}}{2} \\
\hline
\end{array}
\end{equation}
}

For easy referral, we label the putative solutions as $x_1$ (Case $C_{1,1}$), $x_2$ (Case $C_{-1,-1}$), $x_3,x_4$ (Case $C_{1,-1}$), $x_5,x_6$ (Case $C_{-1,1}$).

In Case $C_{1,1}$ we must have 
$\eta\left(\frac{b}{2a(u+1)}-\frac{a}{2} \right)=\eta\left(\frac{b}{2a(u+1)}+\frac{a}{2} \right)=1$, or equivalently,
 \begin{equation}
 \label{eq:case1}
\eta\left(\frac{b}{a^2(u+1)}-1 \right)=\eta\left(\frac{b}{a^2(u+1)}+1  \right)=\eta(2a).
\end{equation}
For  Case $C_{-1,-1}$ we need  $\eta\left(\frac{b}{2a(u-1)}-\frac{a}{2} \right)=\eta\left(\frac{b}{2a(u-1)}+\frac{a}{2} \right)=-1$, or equivalently,
 \begin{equation}
 \label{eq:case2}
\eta\left(\frac{b}{a^2(u-1)}-1 \right)=\eta\left(\frac{b}{a^2(u-1)}+1  \right)=-\eta(2a).
 \end{equation}
 In Case $C_{-1,1}$, for at least a solution to exist, one needs the expression inside the root to be a square, and further $\eta(x_3x_4)=\eta((x_3+a)(x_4+a))=1$, so, 
 $\eta\left(-2b+a^2 \left(u^2-1\right)\right)= \eta\left(b-(u-1)a^2\right)=\eta\left(b+(u+1)a^2\right)=1$, or equivalently,
 \begin{equation}
 \label{eq:case3}
 \begin{split}
 \eta\left(\frac{-2b}{a^2 \left(u^2-1\right)}+1\right)&= \eta(u^2-1),\\
 \eta\left(\frac{b}{(u-1)a^2}-1\right)&=\eta(u-1), \\
 \eta\left(\frac{b}{(u+1)a^2}+1\right)&=\eta(u+1).
 \end{split}
 \end{equation}
 Similarly, in Case $C_{1,-1}$, we must have 
 $\eta\left(2b+a^2 \left(u^2-1\right)\right)=\eta\left(-b-(u-1)a^2\right)= \eta\left(-b+(u+1)a^2\right)=1$, or equivalently,
 \begin{equation}
 \label{eq:case4}
 \begin{split}
 \eta\left(\frac{2b}{a^2 \left(u^2-1\right)}+1\right)&= \eta(u^2-1),\\
 \eta\left(\frac{-b}{(u-1)a^2}-1\right)&=\eta(u-1), \\
 \eta\left(\frac{-b}{(u+1)a^2}+1\right)&=\eta(u+1).
 \end{split}
 \end{equation}

We first take $p\equiv 5\pmod 8$ and $n$ odd (similarly, for the other cases). By Gauss' Reciprocity Law, we know that in these   fields, $2$ is a non-square (recall that 2 is a square in the field $\F_{p^n}$, $p$ odd
  if and only if either $p\equiv \pm 1\pmod 8$
or $n$ is even) and $-1$ is a square (since $p^n\equiv 1\pmod 4$ under our conditions). We shall be using that in the first part of our proof.

When $u=-3$, Case~$C_{1,1}$ reduces to
\[
\eta(b+2a^2)=\eta(b-2a^2)=\eta(a),
\]
Case~$C_{-1,-1}$ reduces to
\[
\eta(b+4a^2)=\eta(b-4a^2)=\eta(a),
\]
Case~$C_{-1,1}$ implies
\[
\eta(b-2a^2)=1\text{ and } [\eta(b+4a^2)=1\text{ or } \eta(b-2a^2)=1]
\]
(it is inclusive or, since we might have one or two solutions satisfying the conditions, here and in the next case)
and finally,
 Case~$C_{1,-1}$ implies
\[
\eta(b-4a^2)=-1\text{ and } [\eta(b-4a^2)=1\text{ or } \eta(b+2a^2)=1].
\]
We note that each case might contribute at most one solution. Summarizing,
\begin{align*}
C_{1,1}&: \eta(b+2a^2)=\eta(b-2a^2)=\eta(a)\\
C_{-1,-1}&: \eta(b+4a^2)=\eta(b-4a^2)=-\eta(a)\\
C_{-1,1}&: \eta(b-2a^2)=-\eta(b+4a^2)=1\\
C_{1,-1}&: \eta(b+2a^2)=-\eta(b-4a^2)=1.
\end{align*}
Thus, the number of solutions is  at most four, which is attained, as one can quickly check for some primes (for example, $p=461, n=1$). Later, we shall show that, in fact, for $q>125$, only the values $3,4$ are obtained.
 \end{proof}

We now continue with the following observation. 
Combining the cases $C_{1,1}$ and $C_{1,-1}$, we are seeking to show that 
$$(x+a)^{\frac{p^n+3}{2}}+u(x+a)^2-x^{\frac{p^n+3}{2}}-ux^2=b$$
has at least three solutions and thus the function $F$ is not APN.

Consider a fixed $u \in \mathbb{F}_q\setminus \{0,\pm 1 \}$.

The case $C_{1,1}$ provides a solution if and only if there exist $a,b,X,Y\in \mathbb{F}_q$, $a\neq 0$, such that 
$$\begin{cases}
    \frac{b}{2a(u+1)}-\frac{a}{2}=X^2\\
    \frac{b}{2a(u+1)}+\frac{a}{2}=Y^2.\\
\end{cases}
$$
On the other hand, the case $C_{-1,1}$ provides two solutions when there exist $a,b,Z,U,V,W,T\in \mathbb{F}_q$, $aZ\neq 0$, such that 
$$\begin{cases}
    a^2(u^2-1)-2b=Z^2\\
    a(u-1)+Z=2U^2\\
    a(u-1)-Z=2V^2\\
    a(u+1)+Z=2\xi W^2\\
    a(u+1)-Z=2\xi T^2.\\
\end{cases}$$
Putting altogether, we can observe that $F$ is not APN if, for a fixed $u \in \mathbb{F}_q\setminus \{0,\pm 1 \}$, there exist $a,b,X,Y,Z,U,V,W,T\in \mathbb{F}_q$, $aZ\neq 0$, satisfying the system 
\begin{equation*}
   \begin{cases}
    \frac{b}{2a(u+1)}-\frac{a}{2}=X^2\\
    \frac{b}{2a(u+1)}+\frac{a}{2}=Y^2\\
    a^2(u^2-1)-2b=Z^2\\
    a(u-1)+Z=2U^2\\
    a(u-1)-Z=2V^2\\
    a(u+1)+Z=2\xi W^2\\
    a(u+1)-Z=2\xi T^2\\
\end{cases} 
\end{equation*}
and such that the three roots
$$\frac{b-(u+1)a^2}{2a(u+1)}, \quad \frac{a (u-1)\pm\sqrt{a^2 \left(u^2-1\right)-2b}}{2}$$
are all distinct. This is implied by $b+a^2(u+1)\ne 0$.

Note that the above system is equivalent to 
\begin{equation}\label{System1}
\begin{cases}
    b=a(u+1)(2X^2+a)\\
    Y^2=a+X^2\\
    Z^2=a(u+1)((u-3)a-4X^2)\\
    U^2=\dfrac{u-1}{2}a+\frac{Z}{2}\\
    V^2=\dfrac{u-1}{2}a-\frac{Z}{2}\\
    W^2=\dfrac{a(u+1)}{2\xi}+\frac{Z}{2\xi}\\
    T^2=\dfrac{a(u+1)}{2\xi}-\frac{Z}{2\xi}.
\end{cases} 
\end{equation}

Our aim is to prove the existence of suitable solutions of the above system. To this end we will use an approach based on function fields over finite fields. 

Let $a$ be such that $a(u+1)$ is a square in $\mathbb{F}_q$. The solutions of the following system 
    \begin{equation}\label{System2_0}
    \begin{cases}
    b=a(u+1)(2X^2+a)\\
    Z^2=a(u+1)((u-3)a-4X^2)\\
    U^2=\dfrac{u-1}{2}a+\frac{Z}{2}\\
    V^2=\dfrac{u-1}{2}a-\frac{Z}{2}\\
    W^2=\dfrac{a(u+1)}{2\xi}+\frac{Z}{2\xi}\\
    T^2=\dfrac{a(u+1)}{2\xi}-\frac{Z}{2\xi}\\
    Y=\dfrac{\xi}{\sqrt{a(u+1)}}TW
\end{cases} 
\end{equation}
are also solutions of System~\eqref{System1}. 

We are now ready to put these together as a first step in the completion of our disproof of the conjecture.

\begin{thm}\label{Th:NoAPN}
    Let $q$ be an odd prime power, $u\in \mathbb{F}_q\setminus \{0,\pm1,3\}$, $a\in \mathbb{F}_q^*$ such that $a(u+1)$ is a square in $\mathbb{F}_q$, $\xi$ a fixed nonsquare in $\mathbb{F}_q$, that is, $\eta(a(u+1))=1,\eta(\xi)=-1$. The function field $\mathbb{K}(X,Y,Z,W,T)$ defined by 
    \begin{equation}
    \label{System2}
    \begin{cases}
    Z^2=a(u+1)((u-3)a-4X^2)\\
    W^2=\dfrac{a(u+1)}{2\xi}+\frac{Z}{2\xi}\\
    T^2=\dfrac{a(u+1)}{2\xi}-\frac{Z}{2\xi}\\
    Y=\dfrac{\xi}{\sqrt{a(u+1)}}TW\\
    U^2=\dfrac{u-1}{2}a+\frac{Z}{2}\\
    V^2=\dfrac{u-1}{2}a-\frac{Z}{2}
\end{cases} 
\end{equation}
has $\mathbb{F}_q$ as a field of constants.
\end{thm}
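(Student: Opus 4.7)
The plan is to build the function field defined by System~\eqref{System2} as an iterated tower of Kummer extensions of degree~$2$,
\[
\mathbb{F}_q(X)\ \subset\ \mathbb{F}_q(X,Z)\ \subset\ \mathbb{F}_q(X,Z,W)\ \subset\ \mathbb{F}_q(X,Z,W,T)\ \subset\ \mathbb{F}_q(X,Z,W,T,U)\ \subset\ \mathbb{F}_q(X,Z,W,T,U,V),
\]
and to apply Lemma~\ref{ConstantField} inductively at every stage; degree~$2$ is coprime to the characteristic of $\mathbb{F}_q$ since $q$ is odd, so each step is a Kummer extension in the sense of Theorem~\ref{Th_Kummer}. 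The coordinate $Y$ is not adjoined independently: because $a(u+1)$ is a square in $\F_q$ by hypothesis, $\sqrt{a(u+1)}\in\F_q$, and the identity $Y=\xi TW/\sqrt{a(u+1)}$ places $Y$ in $\mathbb{F}_q(X,Z,W,T)$. The proof therefore reduces to checking, for each of the five polynomials $\tau^2-h_i$ used to adjoin $Z,W,T,U,V$ in turn, that $h_i$ is a non-square in the current function field (equivalently, that $\tau^2-h_i$ is irreducible over $\mathbb{K}$ coefficient-extended to the base at that stage, which is what Lemma~\ref{ConstantField} requires).

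To establish non-squareness, I will use Theorem~\ref{Th_Kummer}(1): it suffices to exhibit a place of the current field at which $h_i$ has odd valuation. The computations are driven by two product identities obtained by eliminating $Z^2$ via $Z^2=a(u+1)((u-3)a-4X^2)$:
\begin{align*}
(Z+a(u+1))(a(u+1)-Z) &= 4a(u+1)(X^2+a),\\
(Z+(u-1)a)(Z-(u-1)a) &= -4a(u+1)\Bigl(X^2+\tfrac{a}{u+1}\Bigr).
\end{align*}
For the $Z$-step, $(u-3)a-4X^2$ splits into two distinct linear factors (since $u\ne 3$) and is therefore a non-square in $\mathbb{K}(X)$. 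For the $W$- and $T$-steps, the target places of $\mathbb{K}(X,Z)$ are those above $X=\pm\sqrt{-a}$ with $Z=\mp a(u+1)$, at which $Z+a(u+1)$ and $a(u+1)-Z$, respectively, vanish to order one. For the $U$- and $V$-steps one uses the places above $X=\pm\sqrt{-a/(u+1)}$ with $Z=\mp(u-1)a$, which exist since $u\ne -1$ and at which $(u-1)a+Z$ and $(u-1)a-Z$, respectively, have a simple zero.

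The main obstacle is propagating these odd valuations upward through the tower: a place $P$ at which $h_{i+1}$ has odd valuation in $\mathbb{K}(X,Z)$ might acquire ramification in an intermediate Kummer extension, destroying the parity argument. To rule this out, I evaluate at $P$ each previously adjoined radicand among $Z+a(u+1)$, $a(u+1)-Z$, and $(u-1)a+Z$. If each such residue is a nonzero element of the algebraically closed $\mathbb{K}$, then the corresponding preceding quadratic extension splits completely at $P$, and hence there is a place $P'$ of the current field with $v_{P'}(h_{i+1})=v_P(h_{i+1})=1$. Direct substitution at each of the four candidate points above shows that the residues are nonzero constant multiples of $a$, $ua$, or $(u-1)a$, so the exclusions $u\notin\{0,\pm 1,3\}$ from the hypothesis guarantee that the splitting condition holds at every stage. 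Combined with Lemma~\ref{ConstantField}, the non-squareness claims then give that $\mathbb{F}_q$ remains the full constant field throughout the tower, yielding the theorem.
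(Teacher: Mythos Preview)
Your proof is correct. Both the paper and you build the function field as a tower of degree-$2$ Kummer extensions and invoke Theorem~\ref{Th_Kummer} together with Lemma~\ref{ConstantField} at each step, but the decompositions are different. The paper takes $W$ as the base transcendental: from $W^2=(a(u+1)+Z)/(2\xi)$ one gets $Z=2\xi W^2-a(u+1)\in\mathbb{K}(W)$, so $Z$ costs no extension, and the remaining radicands $X^2,T^2,U^2,V^2$ become explicit polynomials in~$W$ alone (of degrees $4,2,2,2$). Checking that each new radicand has a simple zero not shared with the earlier ones is then a matter of comparing roots of polynomials in one variable. Your route starts from $\mathbb{K}(X)$, keeps $Z$ as a genuine quadratic extension, and then uses the product identities for $(Z\pm a(u+1))$ and $(Z\pm(u-1)a)$ to locate simple zeros of the four linear-in-$Z$ radicands; the extra work is the propagation step, verifying that each chosen place of $\mathbb{K}(X,Z)$ stays unramified through the previously adjoined square roots by evaluating the earlier radicands to $2a$, $2ua$, $2(u+1)a$, $2(u-1)a$. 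The paper's rewriting buys one fewer extension and avoids the propagation bookkeeping, while your argument is closer to the system as stated and makes the role of each exclusion $u\notin\{0,\pm1,3\}$ transparent. Either way the constant field is~$\mathbb{F}_q$, as claimed.
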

\begin{proof}
We rewrite the system above as
\begin{equation*}
    \begin{cases}
    Z=2\xi W^2-a(u+1)\\
    X^2= -\dfrac{\xi^2}{a(u+1)}W^4+\xi W^2-a\\
    T^2=\dfrac{a(u+1)}{\xi}-W^2\\
    Y=\dfrac{\xi}{\sqrt{a(u+1)}}TW\\
    U^2=\xi W^2-a\\
    V^2=-\xi W^2+au.
\end{cases}
\end{equation*}
Consider $\mathbb{K}_{0}:= \mathbb{K}(W)$. Clearly, $Z \in \mathbb{K}(W)$. We consider now $\mathbb{K}_{1}:= \mathbb{K}(X,W) $, where $X^2= -\frac{\xi^2}{a(u+1)}W^4+\xi W^2-a$. It is readily seen that $-\frac{\xi^2}{a(u+1)}W^4+\xi W^2-a$ is not a square in $\mathbb{K}_0$ and thus $\mathbb{K}_{1}$  is a Kummer extension of $\mathbb{K}_{0}$ with field of constants $\mathbb{F}_q$ by Theorem~\ref{Th_Kummer} and Lemma~\ref{ConstantField}. Let $\mathbb{K}_{2}:= \mathbb{K}(X,T,W) $, where $T^2=\frac{a(u+1)}{\xi}-W^2$. Since $\pm \sqrt{\frac{a(u+1)}{\xi}}$ are simple zeros of $\frac{a(u+1)}{\xi}-W^2$, which are not zeros of $-\frac{\xi^2}{a(u+1)}W^4+\xi W^2-a$, we conclude that above the places $P_{\pm \sqrt{\frac{a(u+1)}{\xi}}}\in \mathbb{K}_0$ there are exactly $4$ places in $\mathbb{K}_1$ which are simple zeros of $-\frac{\xi^2}{a(u+1)}W^4+\xi W^2$ and thus this function cannot be a square in $\mathbb{K}_1$. Again by  Theorem~\ref{Th_Kummer} and Lemma~\ref{ConstantField} we conclude that $\mathbb{K}_2$ is a Kummer extension of $\mathbb{K}_1$ and its field of constants $\mathbb{F}_q$. 

Consider now $\mathbb{K}_3 := \mathbb{K}(X,T,U,W)$, where $U^2=\xi W^2-a$. The zeros of $\xi W^2-a$ in $\mathbb{K}_2$ are not zeros of $X$ nor of $T$ and thus they lie over unramified places in the extension $\mathbb{K}_2:\mathbb{K}_0$. This shows that they are simple zeros for $\xi W^2-a$ and thus $\xi W^2-a$ is not a square in $\mathbb{K}_2$. By  Theorem~\ref{Th_Kummer} and Lemma~\ref{ConstantField} we conclude that $\mathbb{K}_3$ is a Kummer extension of $\mathbb{K}_2$ and its field of constants $\mathbb{F}_q$. 

Let $\mathbb{K}_4 := \mathbb{K}(X,T,U,V,W)$, where $V^2=-\xi W^2+au$. The zeros of $-\xi W^2+au$ in $\mathbb{K}_3$ are not zeros of $X$, nor of $T$, nor of $U$ and thus they lie over unramified places in the extension $\mathbb{K}_3:\mathbb{K}_0$. Arguing as before, we conclude that $\mathbb{K}_4$ is a Kummer extension of $\mathbb{K}_3$ and its field of constants $\mathbb{F}_q$.

To conclude the proof it is sufficient to note that $\mathbb{K}_5:=\mathbb{K}(X,Y,T,U,V,W) $ coincides with~$\mathbb{K}_4$.
\end{proof}





    




    



We now show that the conjecture of~\cite{BP24} is false, and not only there is no infinite family of APN functions, but in fact there are no APN functions besides those listed in~\cite[Table 5]{BP24}.
\begin{thm}
    Let $q$ be an odd prime power, $q>125$, and select $u \in \mathbb{F}_q\setminus \{0,\pm 1\}$. The polynomial $F(x)=x^{(q+3)/2}+ux^2$ is not APN.
\end{thm}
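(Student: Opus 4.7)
My plan is to turn the absolute irreducibility statement of Theorem~\ref{Th:NoAPN} into a point-counting statement via the Hasse-Weil bound. By construction, any $\mathbb{F}_q$-rational solution of System~\eqref{System1} with $aZ \neq 0$ and $b+a^2(u+1)\neq 0$ produces three pairwise distinct roots of the differential equation $F(x+a)-F(x)=b$, so it forces $\delta_F\geq 3$ and hence that $F$ is not APN. It therefore suffices to exhibit, for every admissible $u$ and some choice of $a$, a ``good'' $\mathbb{F}_q$-rational point on the curve $\mathcal{C}_{u,a}$ cut out by System~\eqref{System2}.

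For $u\in\mathbb{F}_q\setminus\{0,\pm 1,3\}$, I would fix a nonsquare $\xi\in\mathbb{F}_q^*$ and pick any $a\in\mathbb{F}_q^*$ with $\eta(a(u+1))=1$; such $a$ is in abundance, since $(q-1)/2$ elements meet the condition whenever $u\neq -1$. Theorem~\ref{Th:NoAPN} gives that $\mathcal{C}_{u,a}$ is absolutely irreducible over $\mathbb{F}_q$, and its function field is constructed as a tower of four degree-$2$ Kummer extensions of $\mathbb{K}(W)$, of total degree $16$. Iterating the genus formula of Theorem~\ref{Th_Kummer}(4) along this tower, and reusing the already verified fact that the consecutive radicands share no common zeros, I would obtain an explicit upper bound $g(\mathcal{C}_{u,a})\leq g_0$, uniform in $u,a,\xi$, arising from the finite set of ramified places contributed at each Kummer step.

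The Hasse-Weil bound (Theorem~\ref{HasseWeil}) then gives $N_q(\mathcal{C}_{u,a})\geq q+1-2g_0\sqrt{q}$. I would subtract from this the $\mathbb{F}_q$-rational places that fail to produce three honest distinct roots, namely those at infinity and those on the divisors $Z=0$ or $b+a^2(u+1)=0$; each of these divisors lies under at most $16$ places of $\mathcal{C}_{u,a}$, so the correction is an absolute constant $C$. For $q$ exceeding the explicit threshold coming out of $q+1-2g_0\sqrt{q}>C$, a good $\mathbb{F}_q$-rational point exists and $F$ is not APN. The residual case $u=3$ degenerates the equation for $Z$, since $(u-3)a-4X^2$ collapses to $-4X^2$; here I would re-derive an analogous system by pairing case $C_{1,1}$ with $C_{-1,-1}$ instead of with $C_{-1,1}$, obtaining a simpler lower-dimensional curve whose absolute irreducibility and genus bound are straightforward, and then apply the same Hasse-Weil argument. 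The range $q\leq 125$ can be covered by direct computation against the finite list of APN functions in \cite[Table 5]{BP24}.

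The main obstacle is calibrating the genus bound $g_0$ tightly enough. Each Kummer step may add numerous ramified places, and a crude iteration along a degree-$16$ cover produces a genus on the order of $20$, which in turn pushes the Hasse-Weil threshold into the thousands and is consistent with the looser $q>9587$ threshold in the abstract. Lowering it to $q>125$ will require either a refined ramification analysis that exploits the specific factorization structure of the radicands in System~\eqref{System2}, or a hybrid strategy combining Hasse-Weil above a looser threshold with explicit computer verification over the finite intermediate range.
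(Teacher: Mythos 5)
Your main line for $u\neq\pm1,3$ is essentially the paper's own argument: invoke Theorem~\ref{Th:NoAPN} to get a function field with constant field $\F_q$, bound its genus by iterating the Kummer genus formula of Theorem~\ref{Th_Kummer} along the degree-$16$ tower (the paper computes the genera $0,1,3,9,25,25$ for $\mathbb{K}_0,\ldots,\mathbb{K}_5$), apply Hasse--Weil, discard the at most $2^4+2^5+2^6$ places lying at infinity or over $Z=0$ or $b+a^2(u+1)=0$, obtain an explicit threshold (the paper gets $q\geq 2719$), and close the intermediate range $125<q<2719$ by machine computation. Your anticipated genus ``of order $20$'' and the hybrid Hasse--Weil-plus-computation strategy are exactly what is done.

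The one step that would fail is your treatment of $u=3$. Pairing case $C_{1,1}$ with $C_{-1,-1}$ cannot work: each of these two cases is governed by an equation that is \emph{linear} in $x$ and therefore contributes at most one solution, so together they certify at most two solutions of $F(x+a)-F(x)=b$ --- not the three needed to contradict APNness. Any repair must involve at least one of the quadratic cases $C_{1,-1}$ or $C_{-1,1}$. The paper handles $u=3$ as follows: for $q\equiv 1\pmod 4$ the original system still goes through, because $\eta(-1)=1$ allows $a(u+1)$ and $-4a(u+1)$ (the degenerate radicand $a(u+1)((u-3)a-4X^2)=-4a(u+1)X^2$) to be squares simultaneously; for $q\equiv 3\pmod 4$ it runs a direct quadratic-character argument from Equation~\eqref{Table:cases2}, choosing $b=\pm 4a^2$ and the sign of $\eta(a)$ according to the quadratic residuacity of $2$ and $3$ (split by $p\bmod{24}$ and the parity of $n$) so that three of the tabulated solution conditions hold at once. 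You would need to substitute an argument of this kind for your $C_{1,1}$/$C_{-1,-1}$ pairing; the rest of your plan is sound.
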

\begin{proof}
We first let $u=3$.  Although, one can also infer it from~Equation~\eqref{Table:cases2}, if $q\equiv 1\pmod 4$, our treatment of System~\ref{System2} requires $a(u+1)$ and $-4a(u+1)$ to be squares concurrently, which can happen for $q\equiv 1\pmod 4$, since $\eta(-1)=1$.
We removed $u=3$ from the statement of Theorem~\ref{Th:NoAPN} (and even below), since we wanted to treat the system globally, but the arguments also hold for $u=3,q\equiv 1\pmod 4$. 

Thus, we next let $u=3,q\equiv 3\pmod 4$, and so, $\eta(-1)=-1$. Further, if $n$ is even, then regardless of $p$, $2,3$ are quadratic residues and we get at least three viable solutions by taking $\eta(a)=1$, $b=-4a^2$, as we see from  Equation~\eqref{Table:cases2}. If $n$ is odd, $p\equiv 3,11\pmod {24}$, again, $2,3$ are quadratic residues and the same argument applies. If $n$ is odd, $p\equiv 23\pmod {24}$ (we removed $p\equiv 9\pmod {24}$, since we are in the case of $q\equiv 3\pmod 4$; similarly, we will also remove, from the next discussion, the cases $p\equiv 5,21\pmod {24}$), then $\eta(2)=1,\eta(3)=-1$. Taking $b=-4a^2$ with $\eta(a)=1$, at least three solutions of Equation~\eqref{Table:cases2} survive. If $n$ is odd, and $p\equiv 11 \pmod {24}$, then $\eta(2)=-1,\eta(3)=1$, taking again, $b=-4a^2$, with $\eta(a)=-1$, exactly three solutions survive. If $n$ is odd, and $p\equiv 19 \pmod {24}$, then $\eta(2)=\eta(3)=-1$, and $b=4a^2$, at least three solutions survive. Therefore, even when $u=3$, the function is not APN, for $p$ larger than~$29$ (see~\cite[Table 5]{BP24}, for small cases).

We now let $u\neq \pm 1,3$. Select $a\in \mathbb{F}_q^*$ such that $a(u+1)$ is a square in $\mathbb{F}_q$ and consider a fixed nonsquare $\xi\in \mathbb{F}_q$. By Theorem \ref{Th:NoAPN}, System \eqref{System2} defines a function field whose field of constants is $\mathbb{F}_q$. By direct checking, following the same notation as in the proof of Theorem \ref{Th:NoAPN}, by Theorem \ref{Th_Kummer} the genus of $\mathbb{K}_i$, $i=0,\ldots,5$, is $0$, $1$, $3$, $9$, $25$, $25$ respectively. There are at most $2^4$ places lying over $P_{\infty}$. Since we need the three roots to be distinct, $b+a^2(u+1)\neq 0$, together with $Z\neq 0$. Recalling that $b=a(u+1)(2X^2+a)$, the first above condition is equivalent to $X^2\neq -a$ and thus $-\frac{\xi^2}{a(u+1)}W^4+\xi W^2=0$.  There are at most $2^6$ places in $\mathbb{K}_5$ satisfying this constraint. Finally, $Z=0$ corresponds to $W^2=\frac{a(u+1)}{2\xi}$ and again there are at most $2^5$ places in $\mathbb{K}_5$ satisfying this constraint. Thus, the polynomial $F$ is not APN whenever the lower bound given by the Hasse-Weil bound exceeds $2^4+2^5+2^6$, that is 
$$q-50\sqrt{q}-111>0 \iff \sqrt{q}\geq 52.13, \text{ so }, q\geq 2719.$$

For the cases   when $125< q=p^n<2719$ (that is, outside~\cite[Table 5]{BP24}, which lists $q=5^3$ as the highest cardinality when the function is APN, for some specific values of $u$), we used   Magma~\textup{\cite{BCP97}} and found no other cases when the function is APN.
The claim follows.
\end{proof}

 \begin{rem}
Via Magma~\textup{\cite{BCP97}}, we found that, in addition to~\textup{\cite[Table 5]{BP24}}, there are other interesting examples of best differential uniformity functions. For example,  if $p=3$, $n=1$, $u=-1$, the function is PN; if $p=3$, $n=2$, $u=g,g^3,g^5,g^7$, the function is PN.  
 \end{rem}
 
In the appendix we display more computational data to display the differential spectrum for various dimensions and parameters~$u$.

\section{A related class of potential APNs}

Let $F(x)=x^{\frac{p^n-1}{2}+3}+u x^3$ on $\F_{p^n}$ with $p>3$, $q=p^n$. Computationally, we observed that, for some $u$ values, $F$ is APN for $p=5, 7, 11,13,19,23$ and $n=1$, as well as $p=5, n=2$, and has mostly low differential uniformity, for other small dimensions.

One surely wonders if there are infinitely many pairs $(p,n)$ for which $F$ is APN.
We shall show that that is not the case (at least when $q\equiv 1\pmod 3$).
The differential equation at $a\in\F_q^*,b\in\F_q$ is equivalent to 
\[
(x+a)^3(u+t_a)-x^3(u+t_x)=b.
\]
If $t_a=t_x=\epsilon\in\{\pm 1\}$, then, the above equation transforms into
\[
x^2+ax+\frac{a^2}{3}-\frac{b}{u+\epsilon}=0,
\]
of discriminant $\Delta=-\frac{a^2}{3}+\frac{4b}{3(u+\epsilon)}$. It is known that in $\F_{p^n}$   half of all nonzero elements of squares and half are non-squares. Regardless, since $\Delta$ is linear in $b$, then we conclude that one can always find $b$ to force $\Delta$ to be a nonzero square and hence our differential equation has two solutions, for some $a,b$.

If $t_a=-t_x=\epsilon\in\{\pm 1\}$, the differential equation is equivalent to 
\[
G(x):=x^3+\frac{3a(u+\epsilon)}{2\epsilon} x^2+\frac{3a^2(u+\epsilon)}{2\epsilon} x+\frac{a^3(u+\epsilon)-b}{2\epsilon}=0.
\]


In what follows we want to prove that if $q\equiv 1\pmod 3$ there exist $(a,b)\in \mathbb{F}_q^*\times \mathbb{F}_q$ such that $G(x)$ has three distinct roots in $\mathbb{F}_q$, all of them satisfying $t_x=1=-t_a$.

To this end, recall that when  $q\equiv 1\pmod 3$, the roots of $G(x)$ can be expressed in terms of the roots (in $\mathbb{F}_q$) of the Hessian polynomial $H(T)$ associated with $G(x)$. Now, 
$$H(T):=\frac{-9}{4}\left((-a^2u^2 + a^2)T^2+(-a^3u^2 + a^3 - 2b)T+(-abu - ab)\right)$$
whose discriminant is
$$\delta(a,b):= \frac{81}{16}\left(a^6u^4 - 2a^6u^2 + a^6 - 4a^3bu^3 + 4a^3bu + 4b^2\right).$$

According to \cite[Theorem 1.34]{Hirsc} the above cubic equation $G(x)=0$ possesses three (distinct) solutions in $\mathbb{F}_q$ if and only if $\delta(a,b)\in \square_{q}$ (the set of all squares in $\F_q$)  and $G(\beta_1)/G(\beta_2)$ is a cube in $\mathbb{F}_q$, where $\beta_1,\beta_2$ are the roots of $H(T)$, and the discriminant of $G$, say, $\Delta\neq 0$. 

We will make use of a direct expression for the three roots. Let $\eta$ be a fixed third root of unity in $\mathbb{F}_q$, and $e\in \mathbb{F}_q$ fixed with $e^3 =G(\beta_1)/G(\beta_2)$. The three roots of $G(x)$ are 
$$x_1:= \frac{\beta_2 e-\beta_1}{e-1}, \quad x_2:= \frac{\beta_2 \eta e-\beta_1}{\eta e-1}, \quad x_3:= \frac{\beta_2 \eta^2e-\beta_1}{\eta^2 e-1}.$$

Let $X^2=\delta(a,b)$, $\beta_1=\frac{a^3u^2-a^3+2b+X}{2a^2(1-u^2)}$, $\beta_2=\frac{a^3u^2-a^3+2b-X}{2a^2(1-u^2)}$. Then 
$$\frac{G(\beta_1)}{G(\beta_2)}=\frac{a^3 u^3 - a^3 u - 2b - 4X/9 }{a^3 u^3 - a^3 u - 2 b + 4X/9}.$$

Let $\xi \in \mathbb{F}_q$ be a fixed nonsquare. The function $F(x)=x^{\frac{p^n-1}{2}+3}+u x^3$ possesses at least three solutions if the following system 
$$
\begin{cases}
a^6u^4 - 2a^6u^2 + a^6 - 4a^3bu^3 + 4a^3bu + 4b^2 = 16X^2/81\\
\dfrac{a^3 u^3 - a^3 u - 2b - 4X/9 }{a^3 u^3 - a^3 u - 2 b + 4X/9}=Y^3\\
\dfrac{\beta_2 \eta^i e-\beta_1}{\eta^i e-1}=\xi Z_i^2, \ i=0,1,2,\\
\dfrac{\beta_2 \eta^i e-\beta_1}{\eta^i e-1}+a=W_i^2, \ i=0,1,2.
\end{cases}$$
has nontrivial solutions $(X,Y,Z_1,W_1,Z_2,W_2,Z_3,W_3,a,b)$ with $aXY \neq 0$, $Y\neq 1$.

Note that $\frac{\beta_2 \eta^i e-\beta_1}{\eta^i e-1}+a=W_i^2$ can be rewritten as $\xi Z_i^2+a=W_i^2$, $i=0,1,2$. 

From the second equation one gets 
$$b=\frac{9 a^3 u^3 Y^3 - 9 a^3 u^3 - 9 a^3 u Y^3 + 9 a^3 u + 4 Y^3 X + 4 X}{18(Y^3-1)}.$$

Combining with the other equations one obtains
$$\begin{cases}
-81(Y^3-1)^2(u^2-1)^3a^6 + 64Y^3X^2=0\\
(Y-1) \left( 9(u^2-1)a^2(a u + a + 2\xi Z_0^2)Y^3-8XY^2-8X Y-9(u^2-1)a^2(a u + a + 2 \xi Z_0^2)\right)=0\\
\xi Z_0^2+a=W_0^2\\
 (Y-\eta^2)\left(9(u^2-1)a^2(a u + a + 2 Z_1^2)Y^3 -8\eta^2 XY^2-8 \eta X Y-9(u^2-1)a^2(a u + a + 2 Z_1^2)\right)=0\\
\xi Z_1^2+a=W_1^2\\
   (Y-\eta)\left(9(u^2-1)a^2(a u + a + 2 \xi Z_2^2)Y^3-8 \eta XY^2-8\eta^2 X Y-9(u^2-1)a^2(a u + a + 2 \xi Z_2^2)\right)=0\\
   \xi Z_2^2+a=W_2^2.
\end{cases}$$

Let us discard the factors $(Y-1) $, $(Y-\eta)$, $(Y-\eta^2)$. From the second equation we obtain
$$X = \frac{9(a u + a + 2\xi Z_0^2)a^2(u^2-1)(Y^3-1)}{8Y(Y+1)}$$
and thus the system above, using  also $\xi Z_0^2 =-a +W_0^2$,   reads (after discarding factors such as $a$, $Y$, $Y+1$, and $(Y^3-1)$) 
$$\begin{cases}
\xi Z_0^2 =-a +W_0^2\\
(uY^2 + uY + u + Y^2 + 3Y + 1)(u-1)a^2  -4YW_0^2(u-1)a- 4W_0^4Y=0\\
3auY - 3\eta a u + (-2\eta - 1) a Y + (\eta + 2) a + (2\eta + 4)W_0^2 Y +
    (-4\eta - 2)W_0^2 + (-2\eta + 2)(Y+1)Z_1^2=0\\
    W_1^2 =a +\xi Z_1^2 \\
3auY + (3\eta + 3)au + (2\eta + 1)aY + (-\eta + 1)a + (-2\eta + 
    2)W_0^2Y + (4\eta + 2) W_0^2 + (2\eta + 4)(Y+1)\xi Z_2^2 Y=0\\
W_2^2=\xi Z_2^2+a.
\end{cases}$$

Finally we can combine the fourth and the third,  the sixth and fifth equation to get the following 
$$\begin{cases}
(uY^2 + uY + u + Y^2 + 3Y + 1)(u-1)a^2  -4YW_0^2(u-1)a- 4W_0^4Y=0\\
Z_0^2 =\displaystyle\frac{-a +W_0^2}{\xi}\\
Z_1^2=-\displaystyle\frac{(2 \eta + 4) a u Y + (-2 \eta + 2) a u - 2 \eta a Y + (2 \eta + 2) a + 
        (4 \eta + 4) W_0^2 Y - 4 \eta W_0^2}{4(Y+1)}\\
W_1^2 =-\displaystyle\frac{(2\eta + 4) \xi a u Y  + (-2\eta + 2)\xi a u  - 2\eta \xi a Y - 4 a Y + 
        (2\eta + 2) a xi - 4 a + (4\eta + 4)\xi  W_0^2 Y - 4\eta\xi  W_0^2 }{4(Y+1)}\\
Z_2^2=\displaystyle\frac{(2 \eta - 2) a u Y + (-2 \eta - 4) a u + (-2 \eta - 2) a Y + 2 \eta a + 
    4 \eta W_0^2 Y + (-4 \eta - 4) W_0^2}{4\xi(Y+1)}\\
W_2^2=\displaystyle\frac{(2\eta - 2)\xi a u Y  + (-2\eta - 4)\xi a u  + (-2\eta + 2) \xi a Y  + (2\eta + 
    4) a \xi + 4\eta \xi W_0^2 Y  + (-4\eta - 4)\xi W_0^2 }{4\xi(Y+1)}.
\end{cases}$$

Let us put all this discussion together.
\begin{thm}
Let $q\equiv 1 \pmod {3}$. Choose a nonsquare $\xi\in \mathbb{F}_q$ and consider $u\notin \{\pm1,\pm \sqrt{3},\pm 2\}$. If $q$ is large enough then $F(x)=x^{\frac{p^n-1}{2}+3}+u x^3$ is not APN.
\end{thm}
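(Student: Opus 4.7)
The plan is to mirror the strategy of Theorem~\ref{Th:NoAPN}. We will exhibit the final system derived above as defining an algebraic function field over $\mathbb{F}_q$, verify via Kummer theory that it has $\mathbb{F}_q$ as its full constant field, bound its genus, and then invoke the Hasse-Weil bound (Theorem~\ref{HasseWeil}) to produce an $\mathbb{F}_q$-rational tuple $(a,Y,W_0,Z_0,Z_1,W_1,Z_2,W_2)$ whose coordinates furnish three distinct roots $x_1,x_2,x_3$ of $G(x)=0$, each satisfying the character constraints $\eta(x_i)=1=-\eta(x_i+a)$; such a tuple directly witnesses $\Delta_F(a,b)\geq 3$, contradicting APNess.

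Concretely, I would treat the first equation of the final system
$$(u^2-1)a^2 Y^2 + \bigl[(u-1)(u+3)a^2 - 4(u-1)aW_0^2 - 4W_0^4\bigr] Y + (u^2-1)a^2 = 0$$
as a quadratic in $a$. Its discriminant factors neatly as $16(u^2-1)\,Y\,(Y+1)^2 W_0^4$, so adjoining an auxiliary element $S$ with $S^2=Y(u^2-1)$ makes $a$ rational in $(Y,W_0,S)$. This indicates a natural starting field for the tower: $\mathbb{K}_0:=\mathbb{K}(W_0)$, extended to $\mathbb{K}_1:=\mathbb{K}(W_0,S)$ as a Kummer extension of degree $2$. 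Then I would successively adjoin $Z_0, Z_1, W_1, Z_2, W_2$, each defined by the corresponding square-root relation of the final system, obtaining a chain $\mathbb{K}_0\subset\mathbb{K}_1\subset\cdots\subset\mathbb{K}_6$. At each step one invokes Theorem~\ref{Th_Kummer} and Lemma~\ref{ConstantField} to conclude: the new radicand is not already a square in the previous field (verified by exhibiting a place of odd valuation), the extension is Kummer of degree $2$, the constant field remains $\mathbb{F}_q$, and the new genus is controlled by Theorem~\ref{Th_Kummer}(4).

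The main obstacle is the non-square verification at each level, because the radicands defining $Z_1, W_1, Z_2, W_2$ are noticeably more intricate than those of Theorem~\ref{Th:NoAPN} owing to the appearance of the primitive cube root of unity $\eta$. The excluded values $u\in\{\pm 1, \pm\sqrt{3}, \pm 2\}$ are precisely the finite set of parameters at which one of these radicands collapses to a square, factors non-trivially, or creates extra ramification that would force the constant field to jump to $\mathbb{F}_{q^2}$. Outside this exceptional locus, a suitable place of odd valuation is located by tracking the ramification behaviour at $Y=0,-1,\infty$ and at the zeros of the preceding radicands, ensuring the new divisor has an odd component somewhere.

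Once the constant field is confirmed to be $\mathbb{F}_q$ and the genus $g$ of $\mathbb{K}_6$ is bounded by a constant depending only on $u,\xi,\eta$ (and not on $q$), the Hasse-Weil bound produces at least $q+1-2g\sqrt{q}$ rational places on the associated curve. From this one subtracts the $O(1)$ ``bad'' places: those at infinity, those with $aY(Y+1)=0$, those with $Y\in\{1,\eta,\eta^2\}$ (the factors discarded in the reduction), those with $Z=0$ or with one of the square roots vanishing (forcing coincident roots of $G$), and those failing the character constraints $\eta(x_i)=1=-\eta(x_i+a)$. All such exclusions contribute only a bounded number of places, independent of $q$. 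Therefore, for $q$ larger than an explicit threshold analogous to the $q\geq 2719$ of the previous theorem, a genuine $\mathbb{F}_q$-rational witness exists, proving that $F$ is not APN.
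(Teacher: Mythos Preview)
Your overall strategy—build a tower of Kummer extensions, verify the constant field stays $\mathbb{F}_q$, bound the genus, and apply Hasse--Weil—is exactly the paper's. Two concrete points, however, are off.

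First, the base of your tower is not set up correctly. You take $\mathbb{K}_0=\mathbb{K}(W_0)$ and then adjoin $S$ with $S^2=Y(u^2-1)$, but $Y$ does not lie in $\mathbb{K}(W_0)$: the first equation of the system relates $a$, $Y$, $W_0$ without making $Y$ a function of $W_0$ alone. If both $Y$ and $W_0$ are transcendental you are on a surface, and the curve form of Hasse--Weil does not apply. The paper resolves this by \emph{fixing} $W_0\in\mathbb{F}_q^*$ as a constant and taking $\mathbb{K}_0=\mathbb{K}(Y)$; then the first equation, viewed as a quadratic in $a$ with discriminant $64(u^2-1)Y(Y+1)^2W_0^4$, yields $\mathbb{K}_1=\mathbb{K}_0(a)$ as a genuine degree-$2$ Kummer extension, and the tower is one-dimensional throughout.

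Second, your handling of the exceptional $u$ is incomplete. You assert that $\{\pm1,\pm\sqrt{3},\pm2\}$ is ``precisely'' where a radicand degenerates, but this is not what happens. The paper computes, for each $\phi_i$, the resultant with the first equation with respect to $a$, obtaining five explicit quadratics in $Y$; these share a root (hence some $\phi_i$ fails to have its own simple zero) only when $u$ is a root of a considerably larger polynomial $h(u)$ that also depends on the chosen nonsquare $\xi$. The values $\pm2$, $\pm\sqrt{3}$ (and $1$) are the $\xi$-independent roots of $h(u)$; for any \emph{other} root of $h(u)$ the paper's move is to replace $\xi$ by a different nonsquare, which shifts $h(u)$ and sidesteps that particular $u$. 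Your sketch does not contain this two-stage mechanism (derive $h(u)$ via resultants; then exploit the freedom in $\xi$), and without it the argument does not close for all $u$ outside the stated set. Tracking ramification only at $Y=0,-1,\infty$ will not suffice either, since the relevant simple zeros of the $\phi_i$ sit at the roots of those degree-$2$ factors, not at $0,-1,\infty$.
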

\begin{proof}
As a notation, recall that $\mathbb{K}$ is the algebraic closure of $\mathbb{F}_q$. Consider first the case  $u$ not being a root of 
\begin{eqnarray*}
    h(u)&:=& (u^2 - 4)\Big((\xi^2-\xi)u + \xi^2 - \xi + 2/3\Big)\Big(\xi^2 u^2 +(-\xi^2+\xi)u - 2\xi^2 + \xi - 2\Big) \\
    &&\cdot (u-1)(u^2 - 3)(\xi u + \xi - 1)\Big(\xi u - 2/3\xi^2 + 1/3\xi - 2/3\Big)\\
    &&\cdot \Big(\xi^2 u^2 + \xi u - 3\xi^2 + 3\xi - 2\Big)\Big(\xi^2 u^2 +(2\xi -\xi^2)u - 2\xi^2 + 2\xi - 2\Big).
\end{eqnarray*} 
By the discussion above, if the system 
$$\begin{cases}
(uY^2 + uY + u + Y^2 + 3Y + 1)(u-1)a^2  -4YW_0^2(u-1)a- 4W_0^4Y=0\\
Z_0^2 =\displaystyle\frac{-a +W_0^2}{\xi}\\
Z_1^2=-\displaystyle\frac{(2 \eta + 4) a u Y + (-2 \eta + 2) a u - 2 \eta a Y + (2 \eta + 2) a + 
        (4 \eta + 4) W_0^2 Y - 4 \eta W_0^2}{4(Y+1)}\\
W_1^2 =-\displaystyle\frac{(2\eta + 4) \xi a u Y  + (-2\eta + 2)\xi a u  - 2\eta \xi a Y - 4 a Y + 
        (2\eta + 2) a xi - 4 a + (4\eta + 4)\xi  W_0^2 Y - 4\eta\xi  W_0^2 }{4(Y+1)}\\
Z_2^2=\displaystyle\frac{(2 \eta - 2) a u Y + (-2 \eta - 4) a u + (-2 \eta - 2) a Y + 2 \eta a + 
    4 \eta W_0^2 Y + (-4 \eta - 4) W_0^2}{4\xi(Y+1)}\\
W_2^2=\displaystyle\frac{(2\eta - 2)\xi a u Y  + (-2\eta - 4)\xi a u  + (-2\eta + 2) \xi a Y  + (2\eta + 
    4) a \xi + 4\eta \xi W_0^2 Y  + (-4\eta - 4)\xi W_0^2 }{4\xi(Y+1)}.
\end{cases}$$
possesses a solution $(a,Y,Z_0,Z_1,Z_2,W_0,W_1,W_2)\in \mathbb{F}_q^8$ such that $a\neq 0$, $(Y^3-1)Y(Y+1)\neq 0$, $(u+1)Y^2 +(3- 2u)Y + u   + 1 \neq 0$, then $F(x)$ is not APN. Note that $(u+1)Y^2 +(3- 2u)Y + u   + 1 \neq 0$ and $Y(Y+1)\neq 0$ yield $X\in \mathbb{F}_q^*$. To this end, fix an element $W_0\neq 0$ in $\mathbb{F}_q$. 

We will show that such a system defines a function field whose field of fractions is $\mathbb{F}_q$. This will provide, asymptotically, a negative answer for the APNness of the function $F(x)$.

Consider first the equation, 
$$(uY^2 + uY + u + Y^2 + 3Y + 1)(u-1)a^2  -4YW_0^2(u-1)a- 4W_0^4Y=0.$$
The discriminant with respect of $a$ is 
$$64 Y(Y+1)^2 W_0^4(u^2-1) $$
and it is, clearly, a nonsquare in $\mathbb{K}_0:= \mathbb{K}(Y)$. Thus, by Theorem~\ref{Th_Kummer} and Lemma~\ref{ConstantField} the field of constants of $\mathbb{K}_1=\mathbb{K}_0(a)$ is $\mathbb{F}_q$. We want now to prove that the other $5$ equations define always Kummer extensions with field of constants $\mathbb{F}_q$.

To this end, consider the subsequent equations written as
$$Z_0^2=\phi_1(a,Y),\ldots, W_2^2=\phi_5(a,Y)$$
and denote by $\mathbb{K}_{2}\subset \cdots \subset \mathbb{K}_6$ the corresponding function fields. 

To our aim, it is sufficient to show that for each $i=1,\ldots,5$ there exists at least a place in $\mathbb{K}_1$ which is a simple zero for $\phi_i$ and a nonzero for each $\phi_j$, $j\neq i$. This will ensure the existence of a place in $\mathbb{K}_{i+1}$ which is a simple zero for $\phi_i$. 

Thus we will check the resultant between the numerators of the functions $\phi_i(a,Y)$ and $(uY^2 + uY + u + Y^2 + 3Y + 1)(u-1)a^2  -4YW_0^2(u-1)a- 4W_0^4Y=0$ with respect to $a$. We want to prove that each of them has a nonrepeated linear factor in $Y$ (different from $Y$ and $(Y+1)$), which is not a factor of any other resultant. 

We list below the factorizations of these resultants 
\begin{eqnarray*}
    &&(u+1)W_0^4\Big((u-1) Y^2 + (u-3) Y + u   - 1\Big),\\
    &&(Y+1)^4W_0^4(u+1)\Big((u-1) Y^2 + (-\eta - 1)uY + \eta u  + (3\eta + 3) Y - \eta \Big),\\
    &&(Y + 1)^4W_0^4\Big(\xi^2(u^2 - 1) Y^2-(\eta+1)(\xi ^2 u^2 - 2\xi^2 u + 4\xi u - 3\xi^2 + 4\xi - 4)Y+\eta\xi^2(u^2 - 1) \Big),\\
     &&\xi^2 (Y + 1)^4W_0^4(u + 1)\Big((u-1)Y^2 + (\eta u-3) Y + (-\eta - 1) u   + \eta + 1\Big),\\
     &&\xi^4 (Y + 1)^4W_0^4(u - 1)\Big((u+1) Y^2 + (\eta u+3) Y + (-\eta - 1) u   - \eta - 1\Big).
\end{eqnarray*}
As it can be easily checked, there is common zero among them (apart from $-1$) only if $u$ is a root of $h(u)$. If $u$ is not a root of $h(u)$, none of the degree-2 factors above has $Y=-1$ as a root. 

Thus, there exists a simple zero of each $\phi_i$ which is not a zero (nor a pole) of $\phi_j$, $j\neq i$, for each $i=1,\ldots,5$, and the claim follows. 

Suppose that $u$ is a  zero of $h(u)$ distinct from $\pm1,\pm \sqrt{3},\pm 2$. We can choose a different nonsquare $\xi\in \mathbb{F}_q$ to obtain the desired result.

 
\end{proof}

\begin{rem}
Presumably, the same outcome will happen for $q\equiv 2\pmod 3$. Recall (see~\textup{\cite[Theorem 1, Corollary 2.9]{MS87}}, or \textup{\textup~\cite{Hirsc}}) that if  $\F_q$ is a field of  characteristic different from $3$, then $f(x)= ax^3+bx^2+cx+d$,$a\neq 0$,   permutes $\F_q$ if and only if $b^2=3ac$, and $q\equiv 2\pmod 3$. Thus, the case of $q\equiv 2\pmod 3$ will be slightly more involved (though, doable), since one needs several cases necessary to handle the roots of a cubic under this modularity condition on~$q$.
\end{rem}























    

\section{The Difference Distribution Table of a general class of functions in terms of Weil sums}

Since, in general, finding,  theoretically or computationally,  the differential spectrum of a  function is quite difficult, various methods have been previously proposed, one of which based on characters  seems to work very well (proposed and argued in~\cite{PS23} that it achieves speeds of more than ten times as much the classical method).
\begin{thm}
Let $F(x)=x^{\frac{p^n-1}{2}+p^k+1}+u x^{p^j+1}$, $0\leq j\leq k<n$, on $\F_{p^n}$. The Difference Distribution Table entries at $(a,b)\in\F_{p^n}^*\times \F_{p^n}$ are given by
\begin{align*}
\cN_{a,b}
&= p^{-n}\sum_{\alpha,x\in\F_{p^n}} \chi_1\left(\alpha (u+\epsilon) x^{p^k+1}-\alpha(u+\epsilon\mu) x^{p^j+1} \right.\\
&\qquad\qquad \left.+ \left(\left(\alpha a(u+\epsilon)\right)^{p^{n-k}}   + \alpha a^{p^k} (u+\epsilon) \right) x +\alpha\left((u+\epsilon) a^{p^k+1}-b\right)\right).
\end{align*}
\end{thm}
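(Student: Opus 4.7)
The plan is the standard additive-character / orthogonality expansion, combined with Frobenius absorption inside $\chi_1$. First, I would invoke the orthogonality relation
$$\sum_{\alpha\in\F_{p^n}}\chi_1(\alpha y)=\begin{cases}p^n&\text{if }y=0,\\0&\text{otherwise},\end{cases}$$
applied to $y=F(x+a)-F(x)-b$, which gives
$$\cN_{a,b}=p^{-n}\sum_{\alpha,x\in\F_{p^n}}\chi_1\bigl(\alpha(F(x+a)-F(x)-b)\bigr).$$

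Second, I would expand $F$ using the identity $y^{(p^n-1)/2}=\eta(y)$ for $y\in\F_{p^n}^*$ with the convention $0^{(p^n-1)/2}=0$. Setting $\epsilon:=\eta(x+a)$ and $\mu$ so that $\epsilon\mu=\eta(x)$, the difference $F(x+a)-F(x)$ can be assembled, after expanding $(x+a)^{p^s+1}=x^{p^s+1}+ax^{p^s}+a^{p^s}x+a^{p^s+1}$ for $s\in\{k,j\}$, as a polynomial in $x$ with explicit $\epsilon,\mu$-dependent coefficients on the monomials $x^{p^k+1}$, $x^{p^k}$, $x^{p^j+1}$, $x^{p^j}$, $x$, and a constant.

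Third, to put the sum into the claimed shape I would use the Frobenius-invariance identity
$$\chi_1(\gamma y^{p^s})=\chi_1\bigl((\gamma y^{p^s})^{p^{n-s}}\bigr)=\chi_1\bigl(\gamma^{p^{n-s}}y\bigr),$$
which follows from $\Tr(\beta^p)=\Tr(\beta)$ together with $y^{p^n}=y$ on $\F_{p^n}$. Applied to the $ax^{p^k}$ monomial in the $\chi_1$-argument (and using $\epsilon^{p^{n-k}}=\epsilon$ since $\epsilon\in\F_p$), this folds that contribution into the coefficient of $x$, producing the $(\alpha a(u+\epsilon))^{p^{n-k}}$ term that sits next to the direct $\alpha a^{p^k}(u+\epsilon)$ contribution from the expansion.

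Finally, one regroups: the pure $x^{p^k+1}$ monomial keeps coefficient $\alpha(u+\epsilon)$, the $x^{p^j+1}$ monomial has coefficient $-\alpha(u+\epsilon\mu)$, the linear-in-$x$ term is the block just computed, and the constant absorbs as $\alpha((u+\epsilon)a^{p^k+1}-b)$. Stitching these four pieces inside a single $\chi_1$ produces exactly the claimed summand. The main obstacle is the bookkeeping---correctly tracking which of $\eta(x+a)$ and $\eta(x)$ plays which role in the two $(x+a)^{p^s+1}$ expansions, and checking that the boundary cases $x=0$ and $x=-a$ (where one of $\eta(x),\eta(x+a)$ vanishes) remain consistent via the convention $0^{(p^n-1)/2}=0$. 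No deeper tools are required beyond additive orthogonality and the Frobenius identity for $\chi_1$.
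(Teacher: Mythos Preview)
Your proposal is correct and takes essentially the same approach as the paper: express $\cN_{a,b}$ via additive orthogonality as $p^{-n}\sum_{\alpha,x}\chi_1(\alpha(F(x+a)-F(x)-b))$, rewrite the difference using $y^{(p^n-1)/2}=\eta(y)$ with the $\epsilon,\mu$ labels for $\eta(x+a)$ and $\eta(x)$, expand $(x+a)^{p^k+1}$, and then absorb the $a x^{p^k}$ contribution into the linear-in-$x$ coefficient via the Frobenius identity $\chi_1(\gamma x^{p^k})=\chi_1(\gamma^{p^{n-k}}x)$. The paper performs exactly these steps in the same order; your remark about the boundary cases $x\in\{0,-a\}$ via the convention $0^{(p^n-1)/2}=0$ is slightly more careful than the paper, which leaves that implicit.
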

\begin{proof}
As before, we let $\eta(x)=t_x$ and $\eta(x+a)=t_a$.
The differential
equation now becomes
\[
(x+a)^{p^k+1}(u+t_a)-x^{p^j+1} (u+t_x)=b,
\]
and with notations $t_a=\mu t_x=\epsilon$, where both $\mu,\epsilon\in\{\pm 1\}$, we obtain
\begin{equation}
\label{eq:DOjk}
(u+\epsilon) x^{p^k+1}-(u+\epsilon \mu) x^{p^j+1} +(u+\epsilon) (a x^{p^k} +a^{p^k} x) + (u+\epsilon) a^{p^k+1}-b=0.
\end{equation}

The number  $N(b)$ of solutions $(x_1,\ldots,x_n)\in\F_q^n$ ($b$ is fixed) of an equation $f(x_1,\ldots,x_n)=b$ is~\cite{LN97}
\begin{align*}
\cN(b)
&= \frac{1}{q}\sum_{x_1,\ldots,x_n\in \F_q}\sum_{\alpha\in\F_q} \chi_1\left(\alpha \left( f(x_1,\ldots,x_n)- b\right)\right).
\end{align*}
If $\cN_{a,b}$ is the number solutions of solutions for~\eqref{eq:DOjk}, then  
\begin{align*}
p^n\,\cN_{a,b}&=\sum_{x\in\F_{p^n}}\sum_{\alpha\in\F_{p^n}} \chi_1\left(\alpha\left( (u+\epsilon) x^{p^k+1}-(u+\epsilon\mu) x^{p^j+1} +(u+\epsilon) (a x^{p^k} +a^{p^k} x) + (u+\epsilon) a^{p^k+1}-b\right) \right)\\
=& \sum_{\alpha\in\F_{p^n}} \chi_1\left(\alpha\left((u+\epsilon) a^{p^k+1}-b\right)\right)\sum_{x\in\F_{p^n}}\chi_1\left(\alpha\left( (u+\epsilon) x^{p^k+1}-(u+\epsilon \mu) x^{p^j+1} +(u+\epsilon) (a x^{p^k} +a^{p^k} x) \right) \right).
\end{align*}
We now concentrate on the inner sum, for a fixed $\alpha$, and we write
\begin{align*}
& \sum_{x\in\F_{p^n}}\chi_1\left(\alpha\left( (u+\epsilon) x^{p^k+1}-(u+\epsilon \mu) x^{p^j+1} +(u+\epsilon) (a x^{p^k} +a^{p^k} x) \right) \right)\\
&=\sum_{x\in\F_{p^n}}\chi_1\left(\alpha\left( (u+\epsilon) x^{p^k+1}-(u+\epsilon \mu) x^{p^j+1}  \right) \right)\chi_1\left(\alpha(u+\epsilon) (a x^{p^k} +a^{p^k} x) \right)\\
&=\sum_{x\in\F_{p^n}}\chi_1\left(\alpha\left( (u+\epsilon) x^{p^k+1}-(u+\epsilon \mu) x^{p^j+1}  \right)\right)\chi_1\left(\alpha a(u+\epsilon)  x^{p^k} \right)  \chi_1\left(\alpha a^{p^k} (u+\epsilon)  x \right)\\
&=\sum_{x\in\F_{p^n}}\chi_1\left(\alpha\left( (u+\epsilon) x^{p^k+1}-(u+\epsilon \mu) x^{p^j+1}  \right)\right)\chi_1\left(\left(\alpha a(u+\epsilon)\right)^{p^{n-k}}  x \right)  \chi_1\left(\alpha a^{p^k} (u+\epsilon)  x \right)\\
&=\sum_{x\in\F_{p^n}}\chi_1\left(\alpha\left( (u+\epsilon) x^{p^k+1}-(u+\epsilon\mu) x^{p^j+1}  \right)\right)\chi_1\left(\left( \left(\alpha a(u+\epsilon)\right)^{p^{n-k}}   + \alpha a^{p^k} (u+\epsilon) \right) x \right)\\
&=\sum_{x\in\F_{p^n}}\chi_1\left(\alpha (u+\epsilon) x^{p^k+1}-\alpha(u+\epsilon\mu) x^{p^j+1}   + \left(\left(\alpha a(u+\epsilon)\right)^{p^{n-k}}   + \alpha a^{p^k} (u+\epsilon) \right) x \right).
\end{align*}
Putting these sums together, it shows the result.
\end{proof}

We make some further comments on the above sums.
Let $A_1:=\alpha (u+\epsilon)$, $A_2:=-\alpha (u+\epsilon\mu)$, $B:=\left(\alpha a(u+\epsilon)\right)^{p^{n-k}}   + \alpha a^{p^k} (u+\epsilon) $, and let
$S_\alpha:=\displaystyle\sum_{x\in\F_{p^n}}\chi_1\left(A_1x^{p^k+1}+A_2 x^{p^j+1}+Bx \right)$. We write
\allowdisplaybreaks
\begin{align*}
|S_\alpha|^2&=S_\alpha\cdot \bar S_\alpha
=\sum_{x\in\F_{p^n}}\chi_1\left(A_1x^{p^k+1}+A_2 x^{p^j+1}+Bx \right)\overline{\sum_{y\in\F_{p^n}}\chi_1\left(A_1 y^{p^k+1}+A_2 y^{p^j+1}+By \right)}\\
&= \sum_{x\in\F_{p^n}}\chi_1\left(A_1x^{p^k+1}+A_2 x^{p^j+1}+Bx \right) \sum_{y\in\F_{p^n}}\chi_1\left(-A_1 y^{p^k+1}-A_2 y^{p^j+1}-By \right)\\
&=\sum_{x,y\in\F_{p^n}}\chi_1\left(A_1\left(x^{p^k+1}-y^{p^k+1}\right)+A_2 \left(x^{p^j+1}-y^{p^j+1}\right)+B(x-y) \right)\\
&=\sum_{y,z\in\F_{p^n}}\chi_1\left(A_1\left((y+z)^{p^k+1}-y^{p^k+1}\right)+A_2 \left((y+z)^{p^j+1}-y^{p^j+1}\right)+Bz\right)\\
&=\sum_{y,z\in\F_{p^n}}\chi_1\left(A_1\left(z^{p^k+1}+z^{p^k}y+z y^{p^k}\right)+A_2 \left(z^{p^j+1}+z^{p^j}y+z y^{p^j}\right)+Bz\right)\\
&= \sum_{z\in\F_{p^n}}\chi_1\left(A_1 z^{p^k+1}+A_2  z^{p^j+1}+Bz\right)
\sum_{y\in\F_{p^n}} \chi_1\left(A_1\left( z^{p^k}y+z y^{p^k}\right)+A_2 \left( z^{p^j}y+z y^{p^j}\right) \right)\\
&= \sum_{z\in\F_{p^n}}\chi_1\left(A_1 z^{p^k+1}+A_2  z^{p^j+1}+Bz\right)
\sum_{y\in\F_{p^n}} \chi_1\left(\left(A_1\left( z^{p^k}+z^{p^{n-k}} \right)+A_2 \left( z^{p^j}+z^{p^{n-j}}\right) \right)y\right).
\end{align*}
Since the inner sum is of a linear function (in $y$), the sum is zero unless the coefficient of $y$ is zero, $E(z)=0$, where $E(z):= A_1\left( z^{p^k}+z^{p^{n-k}} \right)+A_2 \left( z^{p^j}+z^{p^{n-j}}\right) $. Thus,
\begin{align*}
|S_\alpha|^2=S_\alpha\cdot \bar S_\alpha
&=p^n\sum_{z\in\F_{p^n},E(z)=0}\chi_1\left(A_1 z^{p^k+1}+A_2  z^{p^j+1}+Bz\right).
\end{align*}

 When $k=j$, then $S_\alpha=\displaystyle\sum_{x\in\F_{p^n}}\chi_1\left(\alpha\epsilon(1-\mu)x^{p^k+1}+Bx \right)$ and $E(z)=(A_1+A_2)\left( z^{p^k}+z^{p^{n-k}} \right)$. If further $\mu=1$, or $\alpha=0$, then $E(z)$ is identically~$0$, and if $\mu=-1$ and $\alpha\neq 0$, then $E(z)=2\alpha\epsilon\left( z^{p^k}+z^{p^{n-k}} \right)$. Thus, if $\mu=1$, or $\alpha=0$, then 
 $S_\alpha\cdot \bar S_\alpha=p^n S_\alpha$, therefore, either $S_\alpha=0$, or $S_\alpha=p^n$. 
 
 If $\mu=-1$ and $\alpha\neq 0$, then $E(z)=0$ is equivalent to $z^{p^{2k}}+z=0$.
 It is known~\textup{\cite{ZWW20}}   that a linearized polynomial of the form $L(x)=x^{p^r}+\gamma x\in\F_{p^n}$ is a permutation polynomial if and only if the relative norm $N_{\F_{p^n}/\F_{p^d}}(\gamma)\neq 1$, that is, $(-1)^{n/d} \gamma^{(p^n-1)/(p^d-1)}\neq 1$, where $d=\gcd(n,r)$. In our case, $r=2k$ and  $\gamma=1$, and so, if $\frac{n}{gcd(n,2k)}\equiv 1\pmod 2$, then $z^{p^{2k}}+z$ is a permutation polynomial with $z=0$ as its only root. Thus, if $\mu=-1$, $\alpha\neq 0$, and $\frac{n}{gcd(n,2k)}$ is odd, then $S_\alpha=p^n$.

We thus showed the next corollary.
\begin{cor}
For the function in our prior theorem under $k=j$,  if $\mu=-1$, $\alpha\neq 0$, and $\frac{n}{gcd(n,2k)}$ is odd, then $\cN_{a,\alpha(u+\epsilon)a^{p^{k+1}}}=p^n$, and so, the function cannot be APN.
\end{cor}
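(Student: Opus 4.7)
The plan is to unpack the Weil-sum expression for $\cN_{a,b}$ from the preceding theorem in the collapsed regime $k=j$. Writing $S_\alpha = \sum_{x \in \F_{p^n}} \chi_1(A_1 x^{p^k+1} + A_2 x^{p^j+1} + Bx)$ with $A_1 = \alpha(u+\epsilon)$, $A_2 = -\alpha(u+\epsilon\mu)$, and $B$ as in the theorem, the $k=j$, $\mu=-1$ setting coalesces the two higher-degree terms into $2\alpha\epsilon\, x^{p^k+1}$, which is nonzero as soon as $\alpha \neq 0$. The goal is to force $\cN_{a,b}$ to be large for a suitable $b$ chosen to kill the constant character factor appearing in the outer sum.

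To identify $S_\alpha$, the natural move is to compute $|S_\alpha|^2$ via the usual trick: expand $S_\alpha \overline{S_\alpha}$, substitute $x = y+z$, and use $(y+z)^{p^i+1} - y^{p^i+1} = z^{p^i+1} + z^{p^i}y + zy^{p^i}$ for $i \in \{k,j\}$. The $y$-dependence then becomes linear, and orthogonality of additive characters contributes $p^n$ precisely when the linear coefficient
\begin{equation*}
E(z) := A_1\bigl(z^{p^k}+z^{p^{n-k}}\bigr) + A_2\bigl(z^{p^j}+z^{p^{n-j}}\bigr)
\end{equation*}
vanishes, and $0$ otherwise. Under $k=j$, $\mu=-1$, this collapses to $E(z) = 2\alpha\epsilon\bigl(z^{p^k}+z^{p^{n-k}}\bigr)$, and applying the $p^k$-th power Frobenius rewrites $E(z)=0$ as the linearized equation $z^{p^{2k}} + z = 0$.

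The crucial step is to invoke the permutation criterion for $L(z) = z^{p^r} + \gamma z$ from \cite{ZWW20}: with $r = 2k$, $\gamma = 1$, and $d = \gcd(n,2k)$, $L$ permutes $\F_{p^n}$ iff $(-1)^{n/d} \cdot 1^{(p^n-1)/(p^d-1)} \neq 1$, i.e., iff $n/d$ is odd. The hypothesis $n/\gcd(n,2k)$ odd therefore forces $z=0$ to be the unique root of $E(z)=0$, which pins down $|S_\alpha|^2 = p^n$ (and in fact the specific value of $S_\alpha$ for each nonzero $\alpha$, as recorded in the discussion preceding the corollary).

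Finally, I would substitute back into $p^n\,\cN_{a,b} = \sum_\alpha \chi_1\bigl(\alpha((u+\epsilon)a^{p^k+1} - b)\bigr)\, S_\alpha$ and choose $b = (u+\epsilon)a^{p^k+1}$ so that the $\alpha$-dependent character factor becomes trivial; the sum then collapses to the asserted value $\cN_{a,b} = p^n$. Since $p^n > 2$, the differential equation $D_a F(x) = b$ has many more than two solutions at this pair $(a,b)$, so $F$ cannot be APN. The delicate part is tracking the exact scalar value of $S_\alpha$ (not merely its magnitude $p^{n/2}$) to land on $p^n$ on the nose; for the non-APN conclusion alone, only the far weaker bound $\cN_{a,b} > 2$ is needed, and that already follows from the count of roots of $E(z)$.
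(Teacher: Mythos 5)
Your outline reproduces the paper's own computation step for step (the $|S_\alpha|^2$ expansion, the kernel condition $E(z)=0$, the reduction to $z^{p^{2k}}+z=0$, and the permutation criterion from \cite{ZWW20}), but it inherits, and does not repair, the step at which that computation fails to deliver the stated conclusion. When $z=0$ is the only root of $E$, the squaring argument yields $|S_\alpha|^2=p^n$, i.e.\ $|S_\alpha|=p^{n/2}$ for every $\alpha\neq 0$. That is incompatible with the value $S_\alpha=p^n$ that your final substitution needs: with $b=(u+\epsilon)a^{p^k+1}$ one gets $p^n\,\cN_{a,b}=S_0+\sum_{\alpha\neq 0}S_\alpha=p^n+\sum_{\alpha\neq 0}S_\alpha$, and since each remaining summand has modulus only $p^{n/2}$, the sum cannot collapse to $p^{2n}$. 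You flag this as ``the delicate part,'' but it is not delicate --- it is impossible: equation \eqref{eq:DOjk} with $k=j$ and $\mu=-1$ is a polynomial of degree $p^k+1$ in $x$ with nonzero leading coefficient $2\epsilon$, so it has at most $p^k+1<p^n$ roots and $\cN_{a,b}=p^n$ can never hold for $k<n$. (The discussion preceding the corollary, to which you defer for ``the specific value of $S_\alpha$,'' asserts $S_\alpha=p^n$ at exactly this point, in contradiction with its own formula $|S_\alpha|^2=p^n$ two lines earlier; deferring to it does not close the gap.)

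Your proposed fallback --- that the non-APN conclusion only needs $\cN_{a,b}>2$ and that this ``already follows from the count of roots of $E(z)$'' --- also does not go through as written. Knowing only $|S_\alpha|=p^{n/2}$ gives $\cN_{a,b}=1+p^{-n}\sum_{\alpha\neq 0}S_\alpha$ with an error term of modulus at most $(p^n-1)p^{-n/2}$; without controlling the actual values (signs and phases) of the Weil sums $S_\alpha$, for instance through an explicit Gauss-sum evaluation of $\sum_{x}\chi_1\bigl(2\alpha\epsilon x^{p^k+1}+Bx\bigr)$, you cannot conclude $\cN_{a,b}>2$. Moreover, even granting $\cN_{a,b}>2$, this quantity counts solutions of the polynomial equation \eqref{eq:DOjk} without the side conditions $\eta(x+a)=\epsilon$ and $\eta(x)=\epsilon\mu$ under which that equation was derived, so it does not by itself force a DDT entry to exceed $2$. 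A correct argument needs both a genuine evaluation of the $S_\alpha$ (not just their moduli) and an accounting of the quadratic-character constraints.
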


\section{The case \texorpdfstring{$j=k$}{j=k}} 

When $j=k$ in the general class of the prior section, we can show a stronger result than the previous corollary.
\begin{thm}
We let $F(x)=x^{\frac{p^n-1}{2}+p^k+1}+u x^{p^k+1}$ on $\F_{p^n}$, where $k<n$, $u\ne \pm 1$, $d=\gcd(n,k)$, $q=p^k,Q=p^n$. Then $F(X)$ is not APN.
\end{thm}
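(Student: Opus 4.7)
The plan is to rewrite $F(x)=(u+\eta(x))x^{p^k+1}$ for $x\ne 0$ (using $x^{(p^n-1)/2}=\eta(x)$ on $\F_{p^n}^*$) and to target $b=0$, aiming to show $\cN_{a,0}\ge 3$ for some $a\in\F_{p^n}^*$. Since $u\ne\pm 1$ forces $F(y)=0\iff y=0$, neither $x=0$ nor $x=-a$ can solve $F(x+a)=F(x)$, so every solution is classified by the sign pair $(\eta(x),\eta(x+a))\in\{\pm 1\}^2$. In the parallel cases $\eta(x)=\eta(x+a)=\epsilon$, dividing by $u+\epsilon\ne 0$ collapses the equation to $(x+a)^{p^k+1}=x^{p^k+1}$, so $r:=(x+a)/x$ is a nontrivial $(p^k+1)$-th root of unity and $x_r=a/(r-1)$.

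Let $H$ be the group of $(p^k+1)$-th roots of unity in $\F_{p^n}^*$, of order $\gcd(p^k+1,p^n-1)$. A direct computation yields $|H|=p^d+1$ when $n/d$ is even and $|H|=2$ when $n/d$ is odd. For the main case $n/d$ even, a short parity argument using the factorization $p^n-1=(p^{md}-1)(p^{md}+1)$ (with $n=2md$) and a case split on the parity of $m$ shows that the exponent $(p^n-1)/(p^d+1)$ generating $H$ inside $\F_{p^n}^*$ is always even, so $H$ lies inside the square subgroup. Consequently every nontrivial $r\in H$ satisfies $\eta(r)=1$, and for each such $r$ one has $\eta(x_r)=\eta(x_r+a)=\eta(a)/\eta(r-1)\in\{\pm 1\}$; thus $x_r$ automatically lies in either Case $++$ or Case $--$ and is a genuine solution, regardless of the choice of $a$. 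Since $r\mapsto x_r$ is injective, this gives $\cN_{a,0}\ge |H|-1=p^d\ge 3$ for every $a\ne 0$, and $F$ is not APN.

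For the residual case $n/d$ odd, $H=\{\pm 1\}$ yields at most the single parallel solution $x=-a/2$ (and only when $\eta(-1)=1$), so the extra solutions must come from the crossed cases $\eta(x)=-\eta(x+a)$, where the equation becomes $r^{p^k+1}=(u\pm 1)/(u\mp 1)$. Because $\gcd(p^k+1,p^n-1)=2$, the image of $r\mapsto r^{p^k+1}$ is exactly the subgroup of squares, so two roots $\pm r_0$ exist iff $\eta(u^2-1)=1$, and Cases $+-$ and $-+$ together contribute up to four more $x$-solutions. Combining these with the parallel contribution and a careful sign analysis yields $\cN_{a,0}\ge 3$ whenever $\eta(u^2-1)=1$.

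The hardest sub-case is $n/d$ odd together with $\eta(u^2-1)=-1$, where $b=0$ does not supply enough solutions. Here I would abandon $b=0$ and invoke the character-sum formula of the preceding section: when $\mu=-1$ and $n/d$ is odd, the inner Weil sum $S_\alpha$ has magnitude $p^{n/2}$ (a genuine Gauss-sum regime, in contrast to the degenerate $S_\alpha\in\{0,p^n\}$ obtained for $\mu=1$), so by tuning $b$ of the form $(u+\epsilon)a^{p^k+1}+\delta$ one can align the phases $\chi_1(\alpha((u+\epsilon)a^{p^k+1}-b))$ to push $\sum_b\cN_{a,b}^2$ above $2p^n$, forcing some $\cN_{a,b}>2$. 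The technical heart is the bookkeeping of these quadratic Gauss-sum phases and the verification that at least three solutions survive the four-way sign partition.
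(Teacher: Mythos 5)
Your argument for the case $n/d$ even is correct, complete, and genuinely different from (and more elementary than) the paper's: there $\gcd(p^k+1,p^n-1)=p^d+1$, the quotient $(p^n-1)/(p^d+1)$ is indeed always even (your parity split on $m$ in $n=2md$ checks out), so the kernel $H$ of $x\mapsto x^{p^k+1}$ sits inside the squares; each nontrivial $r\in H$ then yields a solution $x_r=a/(r-1)$ of $F(x+a)=F(x)$ that automatically lands in a parallel sign class, giving $\Delta_F(a,0)\ge p^d\ge 3$ for \emph{every} $a\ne 0$, with no largeness assumption on $q$. The paper takes an entirely different route: it works with the crossed cases, substitutes to reach $X^{q+1}+X+A=0$, uses \cite{Kim1} to pick $b$ so that this equation has $p^d+1$ roots, and then builds a tower of Kummer extensions plus Hasse--Weil to force three of those roots into the required quadratic-residue classes for $q$ large. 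Your argument buys an unconditional, computation-free proof in the even case; the paper's buys uniformity over all $k$.

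The gap is the case $n/d$ odd, where $\gcd(p^k+1,p^n-1)=2$ and $b=0$ carries too little information. First, the assertion that a ``careful sign analysis'' gives $\cN_{a,0}\ge 3$ whenever $\eta(u^2-1)=1$ is false. Every crossed candidate has the form $x=a/(r-1)$ with $r$ one of the two fixed $(p^k+1)$-th roots of $(u\pm 1)/(u\mp 1)$, and $\eta(x)=\eta(a)\eta(r-1)$, $\eta(x+a)=\eta(a)\eta(r)\eta(r-1)$; hence a candidate can realize a crossed sign pattern only if $\eta(r)=-1$. When both roots $\pm r_0$ of $(u+1)/(u-1)$ are squares this never happens: e.g.\ for $k=0$, $p^n=13$, $u=2$ one has $(u+1)/(u-1)=3$ with square roots $\pm 4$, both squares mod $13$, so $\cN_{a,0}=1$ for every $a$ and your $b=0$ device proves nothing, even though $\eta(u^2-1)=1$. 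Second, the fallback for the remaining sub-cases is only a sketch, and the sketch does not cohere: the second moment $\sum_b\cN_{a,b}^2$ is a fixed quantity for given $a$ (there is no $b$ left to ``tune'' inside it); the Weil sums of the preceding section count roots of the polynomial identity attached to a fixed sign pattern $(\epsilon,\mu)$ \emph{without} enforcing $\eta(x)=t_x$ and $\eta(x+a)=t_a$, so the honest character expression for the constrained counts carries additional quadratic-character factors that your plan does not address; and no estimate is offered showing the moment exceeds $2p^n$. Since $n/d$ is odd for all $k=0$ and for all odd $n$ with any $k$, this unproved case is the bulk of the theorem, and it is precisely where the paper's Kummer-tower/Hasse--Weil argument does the work.
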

\begin{proof}
For given $a\in\F_p^*, b\in\F_p$ we need to look at the differential equation $F(x+a)-F(x)=b$, that is,
\[
(x+a)^{\frac{p^n-1}{2}+p^k+1}+u(x+a)^{p^k+1}-x^{\frac{p^n-1}{2}+p^k+1}-ux^{p^k+1}=b.
\]
Denoting $t_a=\eta(x+a),t_x=\eta(x)$,   the equation above becomes
\begin{equation}
\label{eq:main2}
 (x+a)^{p^k+1} (t_a+u)-x^{p^k+1}(t_x+u)=b.
\end{equation}

We now distinguish four cases, which are displayed in the next table (we let $\epsilon=1,0$, if $-1$ is a $p^k-1$ power in $\F_{p^n}$, respectively, not a power).
{
\[
\begin{array}{|l||r|r|c|c|}
\hline \text { Case } & t_a & t_x & \text { Equation \eqref{eq:main2} } & \textrm{Number of roots}   \\
\hline 
D_{1,1} & 1 & 1 & x^{p^k}+a^{p^k-1}x+a^{p^k}-\frac{b}{u+1}=0 & \leq\epsilon\cdot (p^d-1)  \\
\hline 
D_{-1,-1} & -1 & -1 &  x^{p^k}+a^{p^k-1}x+a^{p^k}-\frac{b}{u-1}=0  & \leq\epsilon\cdot (p^d-1)  \\
\hline 
D_{1,-1} & 1 & -1 & x^{p^k+1}+\frac{a(1+u)}{2}x^{p^k}+\frac{a^{p^k}(1+u)}{2} x +\frac{a^{p^k+1}(1+u)-b}{2}=0 & N_1     \\
\hline 
D_{-1,1} & -1 & 1 &  x^{p^k+1}+\frac{a(1-u)}{2}x^{p^k}+\frac{a^{p^k}(1-u)}{2} x +\frac{a^{p^k+1}(1-u)+b}{2}=0    &  N_2   \\
\hline
\end{array}
\]
}

We shall look at the potential $N_1,N_2$ next, finding parameters $a,b$, for which either $N_1,N_2$ are greater than~$2$.
With $r=\frac{a(1+u)}{2},s=\frac{a^q(1+u)}{2}, t=\frac{a^{p^k+1}(1+u)-b}{2}$, for case $D_{1,-1}$, respectively, $r=\frac{a(1-u)}{2},s=\frac{a^q(1-u)}{2}, t=\frac{a^{p^k+1}(1-u)+b}{2}$, for case $D_{-1,1}$, 
and using the substitution 
$x=(s-r^q)^{\frac1q}X-r$, both equations in cases $D_{1,-1}$ and $D_{-1,1}$ become
\begin{equation}
\label{eq:kim1}
X^{q+1}+X+A=0,
\end{equation}
where 
\begin{align*} 
A&=\left(u^q-u\right)^{-\frac{q+1}{q}}\left(-2 b\, a^{-q-1} - u^2+1\right)=\frac{-2 b\, a^{-q-1} - u^2+1}{\left(u-u^{\frac{1}{q}}\right)^{q+1}}, \text{ for case }D_{1,-1},\\ A&=\left(u^q-u\right)^{-\frac{q+1}{q}}\left(2 b\, a^{-q-1} - u^2+1 \right)=\frac{2 b\, a^{-q-1} - u^2+1}{\left(u-u^{\frac{1}{q}}\right)^{q+1}}, \text{ for case } D_{-1,1}.
\end{align*}
Via \cite[Theorem 8]{Kim1}, we know that Equation~\eqref{eq:kim1} has $p^d+1$ roots if and only if there exists $U\in\F_Q\setminus \F_{p^{2d}}$ such that $A=\frac{(U-U^q)^{q^2+1}}{(U-U^{q^2})^{q+1}}$, in which case, those $p^d+1$ roots are given by
\begin{align*}
x_0=\frac{-1}{1+(U-U^q)^{q-1}},\ x_{\alpha}=\frac{-(U+\alpha)^{q^2-q}}{1+(U-U^q)^{q-1}}, \ \alpha\in\F_{p^d}.
\end{align*}
Regardless, of what the chosen $U\in\F_Q\setminus \F_{p^{2d}}$ is, since $A$ is linear in  $b$, then one is always able to find a value of $b$ such that $A=\frac{(U-U^q)^{q^2+1}}{(U-U^{q^2})^{q+1}}$.

We can force $x_0$, $x_{1}$, and $x_{-1}$ to be roots (asymptotically). Thus, we need
\begin{equation}\label{EquazioneGen}
\begin{cases}
 \frac{-1}{1+(U-U^q)^{q-1}}=\xi X^2\\
\frac{-1}{1+(U-U^q)^{q-1}}+a=Y^2\\ 
 \frac{-(U+1)^{q^2-q}}{1+(U-U^q)^{q-1}}=\xi Z^2\\
\frac{-(U+1)^{q^2-q}}{1+(U-U^q)^{q-1}}+a=V^2\\
 \frac{-(U-1)^{q^2-q}}{1+(U-U^q)^{q-1}}=\xi W^2\\
\frac{-(U-1)^{q^2-q}}{1+(U-U^q)^{q-1}}+a=T^2.
\end{cases}
\end{equation}


Combining the first and the third equation we get
$$\xi X^2(U+1)^{q^2-q}=\xi Z^2$$
 and thus $Z=\pm X (U+1)^{(q^2-q)/2}$.
 With the same argument, $W=X (U-1)^{(q^2-q)/2}$. Thus it is enough to show the existence of solutions of the following system 
$$
\begin{cases}
 \frac{-1}{1+(U-U^q)^{q-1}}=\xi X^2\\
\frac{-1}{1+(U-U^q)^{q-1}}+a=Y^2\\ 
\frac{-(U+1)^{q^2-q}}{1+(U-U^q)^{q-1}}+a=V^2\\
\frac{-(U-1)^{q^2-q}}{1+(U-U^q)^{q-1}}+a=T^2.\\
\end{cases}
$$ 

Clearly, all the roots of $1+(U-U^q)^{q-1}$ are distinct and so the poles of $\frac{-1}{1+(U-U^q)^{q-1}}$ are simple. This shows that 
$$\frac{-1}{1+(U-U^q)^{q-1}}=\xi X^2$$
is absolutely irreducible and $\mathbb{K}(X,U): \mathbb{K}(U) $, where $\mathbb{K}$ is the algebraic closure of $\mathbb{F}_q$, is a Kummer extension of the rational function field $\mathbb{K}(U) $ by Theorem~\ref{Th_Kummer}. By  Lemma~\ref{ConstantField} the field of constants of $\mathbb{K}(X,U)$ is $\mathbb{F}_q$. 
Consider the zeros of 
$$\phi_1:=\frac{-1}{1+(U-U^q)^{q-1}}+a, \psi_2:=\frac{-(U+1)^{q^2-q}}{1+(U-U^q)^{q-1}}+a, \phi_3:=\frac{-(U-1)^{q^2-q}}{1+(U-U^q)^{q-1}}+a.$$
They are roots of  
\begin{eqnarray*}\psi_1(U)&:=&-1+a(1+(U-U^q)^{q-1}),\\
\psi_2(U)&:=&-(U+1)^{q^2-q}+a(1+(U-U^q)^{q-1}),\\ \psi_3(U)&:=&-(U-1)^{q^2-q}+a(1+(U-U^q)^{q-1}),
\end{eqnarray*}
respectively. 

Since we can suppose that $a \neq 0,1$, all the roots of $\psi_1,\psi_2,\psi_3$ are distinct. In fact 
$$\psi^{\prime}_i(U)=-a(U-U^q)^{q-2}$$
and thus repeated roots can only belong to $\mathbb{F}_q$. On the other hand, $U\in \mathbb{F}_q$ being a root of $\psi_i$ yields either $a=0$ or $a=1$. 

Also, the zeros of $\psi_i$ and $\psi_j$, $i\neq j$, are  distinct. If $\psi_1$ and $\psi_2$ or $\psi_1$ and $\psi_3$ share a root, then such a root $z$  satisfies $(z+1)^{q-1}=1$ or $(z-1)^{q-1}=1$, and thus $z \in \mathbb{F}_q$. We already showed that this is not possible. If $\psi_2$ and $\psi_3$ share a root $z$, then $z=(1+\lambda)(\lambda-1)$, for some $\lambda$ in $\mathbb{F}_{q}\setminus \{0,\pm1\}$ and thus $z \in \mathbb{F}_q$, again a contradiction to $a\neq0,1$. 
Consider the function field extensions 
$$ \mathbb{K}(Y,X,U): \mathbb{K}(X,U), \mathbb{K}(V,Y,X,U): \mathbb{K}(Y,X,U), \mathbb{K}(T,V,Y,X,U): \mathbb{K}(V,Y,X,U),$$
defined by $Y^2=\phi_1$, $V^2=\phi_2$, and $T^2=\phi_3$ respectively.
From the argument above  each $\phi_i$ is not a square in the corresponding function field and thus by Theorem~\ref{Th_Kummer} and   Lemma~\ref{ConstantField}  each of the above extensions are Kummer extensions with field of constants $\mathbb{F}_q$.

This shows that, if $q$ is large enough, there are instances of $U,X,Y,Z,V,W,T$ satisfying System \eqref{EquazioneGen} and thus Equation \eqref{eq:main1} admits 3 solutions and $F(x)$ is not APN.
\end{proof}

\section*{Acknowledgements}
The second-named author (PS) would like to thank  the first-named author (DB) for the invitation at the Dipartimento di Matematica e Informatica  at Universit\`a degli Studi di Perugia, Italy, and the great working conditions while this paper was being written. The first-named author (DB) thanks the Italian National Group for Algebraic and Geometric Structures and their Applications (GNSAGA—INdAM) which supported the research.

\section*{Appendix}

We now display some computational data displaying the distribution of differential uniformity (DU) for various dimensions and parameters $u$. The notation $a^b$ means that the uniformity $a$ has frequency $b$.
\begin{table*}[ht]
    \centering
    \renewcommand{\arraystretch}{1.3}
    \begin{tabular}{c l}
        \hline
        $p$ &$u$\ :\ Differential Spectrum \\
        \hline
        5 & $2: \{ 0^{4}, 1^{12}, 2^{4} \}$ \\
          & $3: \{ 0^{4}, 1^{12}, 2^{4} \}$ \\
        \hline
        7 & $2: \{ 0^{12}, 1^{24}, 3^{6} \}$ \\
          & $3: \{ 0^{12}, 1^{18}, 2^{12} \}$ \\
          & $4: \{ 0^{12}, 1^{18}, 2^{12} \}$ \\
          & $5: \{ 0^{12}, 1^{24}, 3^{6} \}$ \\
        \hline 
        11 & $2: \{ 0^{40}, 1^{30}, 2^{40} \}$ \\
           & $3: \{ 0^{50}, 1^{20}, 2^{30}, 3^{10} \}$ \\
           & $4: \{ 0^{30}, 1^{60}, 2^{10}, 3^{10} \}$ \\
           & $5: \{ 0^{40}, 1^{50}, 3^{20} \}$ \\
           & $6: \{ 0^{40}, 1^{50}, 3^{20} \}$ \\
           & $7: \{ 0^{30}, 1^{60}, 2^{10}, 3^{10} \}$ \\
           & $8: \{ 0^{50}, 1^{20}, 2^{30}, 3^{10} \}$ \\
           & $9: \{ 0^{40}, 1^{30}, 2^{40} \}$ \\
        \hline 
        13 & $2: \{ 0^{48}, 1^{84}, 3^{24} \}$ \\
           & $3: \{ 0^{60}, 1^{48}, 2^{36}, 3^{12} \}$ \\
           & $4: \{ 0^{48}, 1^{72}, 2^{24}, 3^{12} \}$ \\
           & $5: \{ 0^{24}, 1^{108}, 2^{24} \}$ \\
           & $6: \{ 0^{60}, 1^{66}, 2^{12}, 3^{12}, 5^{6} \}$ \\
           & $7: \{ 0^{60}, 1^{66}, 2^{12}, 3^{12}, 5^{6} \}$ \\
           & $8: \{ 0^{24}, 1^{108}, 2^{24} \}$ \\
           & $9: \{ 0^{48}, 1^{72}, 2^{24}, 3^{12} \}$ \\
           & $10: \{ 0^{60}, 1^{48}, 2^{36}, 3^{12} \}$ \\
           & $11: \{ 0^{48}, 1^{84}, 3^{24} \}$ \\
        \hline
    \end{tabular}
    \caption{Properties of $F(x)=x^{\frac{p^n+1}{2}}+ux^2$ for different values of $(p, u)$ and $n=1$}
    \label{tab:differential_spectrum}
\end{table*}
 \begin{table*}[ht]
    \centering
    \renewcommand{\arraystretch}{1.3}
    \begin{tabular}{c l}
        \hline
        $p$ & $u$\ :\ Differential Spectrum \\
        \hline
17 & $2: \{ 0^{64}, 1^{144}, 2^{64} \}$\\
& $3: \{ 0^{96}, 1^{136}, 2^{16}, 4^{16}, 5^8 \}$\\
&$4: \{ 0^{96}, 1^{112}, 2^{32}, 3^{32} \}$\\
& $5: \{ 0^{80}, 1^{144}, 2^{16}, 3^{32} \}$\\
&$6: \{ 0^{112}, 1^{80}, 2^{48}, 3^{32} \}$\\
& $7: \{ 0^{64}, 1^{144}, 2^{64} \}$\\
& $8: \{ 0^{80}, 1^{112}, 2^{80} \}$\\
& $9: \{ 0^{80}, 1^{112}, 2^{80} \}$\\
& $10: \{ 0^{64}, 1^{144}, 2^{64} \}$\\
& $11: \{ 0^{112}, 1^{80}, 2^{48}, 3^{32} \}$\\
&  $12: \{ 0^{80}, 1^{144}, 2^{16}, 3^{32} \}$\\
& $13: \{ 0^{96}, 1^{112}, 2^{32}, 3^{32} \}$\\
& $14: \{ 0^{96}, 1^{136}, 2^{16}, 4^{16}, 5^{8} \}$\\
& $15: \{ 0^{64}, 1^{144}, 2^{64} \}$ \\    
        \hline
19  & $2: \{ 0^{108}, 1^{180}, 2^{18}, 3^{18}, 4^{18} \}$\\
& $3: \{  0^{108}, 1^{144}, 2^{72}, 3^{18} \}$\\
& $4: \{  0^{108}, 1^{126}, 2^{108} \}$\\
& $5: \{  0^{108}, 1^{180}, 2^{18}, 3^{18}, 4^{18} \}$\\
& $6: \{  0^{90}, 1^{198}, 2^{18}, 3^{36} \}$\\
& $7: \{  0^{126}, 1^{126}, 2^{72}, 4^{18} \}$\\
& $8: \{  0^{126}, 1^{126}, 2^{72}, 4^{18} \}$\\
& $9: \{  0^{72}, 1^{198}, 2^{72} \}$\\
& $10: \{  0^{72}, 1^{198}, 2^{72} \}$\\
& $11: \{  0^{126}, 1^{126}, 2^{72}, 4^{18} \}$\\
& $12: \{  0^{126}, 1^{126}, 2^{72}, 4^{18} \}$\\
& $13: \{  0^{90}, 1^{198}, 2^{18}, 3^{36} \}$\\
& $14: \{  0^{108}, 1^{180}, 2^{18}, 3^{18}, 4^{18} \}$\\
& $15: \{  0^{108}, 1^{126}, 2^{108} \}$\\
& $16: \{  0^{108}, 1^{144}, 2^{72}, 3^{18} \}$\\
& $17: \{  0^{108}, 1^{180}, 2^{18}, 3^{18}, 4^{18} \}$\\
\hline
    \end{tabular}
    \caption{Properties of $F(x)=x^{\frac{p^n+1}{2}}+ux^2$ for different values of $(p, u)$ and $n=1$}
    \label{tab:differential_spectrum2}
\end{table*}

For $p=5, n=2$, and $g$ a primitive root in the corresponding finite field (with the regular Magma~\cite{BCP97} primitive polynomial implementation), the  possible differential uniformity for various values of $u$ are $2,3,4,5$, and the function is APN for $u$ equal to $g^3,g^9,g^{15},g^{21}$. For $p=5, n=3$, the  possible differential uniformity for various values of $u$ are $2,3,4,5$, and the function is APN for $u$ equal to $2,3$. For $p=5, n=4$, the  possible differential uniformity for various values of $u$ are $3,4,5,6$; for $p=7,n=2$, the  possible differential uniformity for various values of $u$ are $2,3,4,5$,  and the function is APN for $u$ equal to $g^2,g^{12},g^{14},g^{26},g^{36},g^{38}$; for $p=7,n=3$, possible DU is $3,4,5$; for $p=11,13$ and $n=2$, possible DU is $3,4,5,6$.

Regarding the function $F(X)= x^{\frac{p^n-1}{2}+3}+ux^3$, $u\neq 0,\pm1$, for $p=5,n=1$, the function is APN for $u=1,2,3$ and has DU $3$ for the other values; for $p=5,n=2$, the function is APN for all values of $u$, except for $g^4,g^8,g^{16},g^{20}$, when it has DU  $9$; for $p=5,n=3$ the DU values are $3,4,6,7,8$; for $p=7,n=1$, the DU values are $2,3$, the function being APN for $u=2,3$; for $p=7,n=2$, the DU values are $4,6$; for $p=7,n=2$, the DU values are $ 3,4,5$; for $p=11,n=1$, the function is APN for $u$ equal to $3,8$, and for the remaining values of $u$, it has a DU of $3$; for $p=11,n=2$, the DU values are $4,5,6,8$; 
for $p=13,n=1$, the DU values are $2,3$, the function being APN for $u=2,4,6$, the function being APN for $u=2,11$. Some sample differential spectrum for this function is displayed below.
 \begin{table*}[ht]
    \centering
    \renewcommand{\arraystretch}{1.3}
    \begin{tabular}{c c l}
        \hline
        $p$ & $n$ & $u$\ :\ Differential Spectrum \\
        \hline
    5 & 1 & $2: \{0^{8}, 1^{4}, 2^{8}\}$   \\
   & & $3: \{0^{8}, 1^{4}, 2^{8}\}$  \\
   & &  $4: \{0^{8}, 1^{4}, 2^{8}\}$  \\
   \hline
    5 & 2 & $w: \{0^{276}, 1^{23}, 2^{276}\}$ \\
    && $w^2: \{0^{276}, 1^{23}, 2^{276}\} $ \\
    && $w^3\ \{0^{276}, 1^{23}, 2^{276}\}$ \\
    && $w^4: \{0^{460}, 1^{12}, 4^{44}, 6^{48}, 9^{11}\}$ \\
   & & $w^5: \{0^{276}, 1^{23}, 2^{276}\}$\\
   & & $2: \{0^{276}, 1^{23}, 2^{276}\}$ \\
    && $w^7: \{0^{276}, 1^{23}, 2^{276}\} $ \\
    && $w^8: \{0^{460}, 1^{11}, 4^{48}, 6^{44}, 9^{12}\} $ \\
    && $w^9: \{0^{276}, 1^{23}, 2^{276}\}  $\\
    && $w^{10}: \{0^{276}, 1^{23}, 2^{276}\} $ \\
    && $w^{11}: \{0^{276}, 1^{23}, 2^{276}\}$ \\
    && $4: \{0^{460}, 1^{12}, 4^{44}, 6^{48}, 9^{11}\}$ \\
    && $ w^{13}: \{0^{276}, 1^{23}, 2^{276}\}$ \\
    && $w^{14}\ \{0^{276}, 1^{23}, 2^{276}\} $ \\
    & & $ w^{15}: \{0^{276}, 1^{23}, 2^{276}\} $\\
    && $w^{16}: \{0^{460}, 1^{11}, 4^{48}, 6^{44}, 9^{12}\} $\\
    && $w^{17}\ \{0^{276}, 1^{23}, 2^{276}\} $ \\
    && $3: \{0^{276}, 1^{23}, 2^{276}\}  $ \\
     && $w^{19}: \{0^{276}, 1^{23}, 2^{276}\} $\\
     && $w^{20}: \{0^{460}, 1^{12}, 4^{44}, 6^{48}, 9^{11}\} $ \\
     && $w^{21}: \{0^{276}, 1^{23}, 2^{276}\}$ \\
     && $w^{22}: \{0^{276}, 1^{23}, 2^{276}\}$\\
     && $w^{23}: \{0^{276}, 1^{23}, 2^{276}\}$ \\
    \hline
    \end{tabular}
    \caption{Properties of $F(x)=x^{\frac{p^n+1}{3}}+ux^3$ for different values of $(p,n, u)$}
    \label{tab:differential_spectrum3}
\end{table*}

For the function $F(x)=x^{\frac{p^n-1}{2}+p^b+1}+x^{p^a+1}$, $0\leq a\leq b<n$ ($(a,b)=(0,0)$ was previously considered, so we avoid it here), for $(p,n,a,b)=(5, 2, 0, 1)$, possible DU values are $2,3,5$; for $(p,n,a,b)=(5, 2, 1, 1)$, possible DU values are $5,9,13$; for $p,n,a,b)=(5, 3, 0, 1)$, possible DU values are $6,7$; for $(p,n,a,b)=(5, 3, 0, 2)$, possible DU values are $7$; for $(p,n,a,b)=(5, 3, 1, 2)$, possible DU values are $6,7$; for $(p,n,a,b)=(5, 3, 2,2)$, possible DU values are $2,3,4,5$; for $(p,n,a,b)=(7, 2, 0, 1)$, possible DU values are $4,5,7$.

\end{document}